\title{Minkowski inequality for nearly spherical domains}
\author{F. Glaudo}
\address{ETH, Rämistrasse 101, 8092 Zürich, Switzerland}
\email{federico.glaudo@math.ethz.ch}
\let\lamdba\lambda
\DeclareMathOperator{\II}{I\hspace{-0.5pt}I}
\begin{document}
\begin{abstract}
    We investigate the validity and the stability of various Minkowski-like inequalities for $C^1$-perturbations of the ball.

    Let $K\subseteq\R^n$ be a domain (possibly not convex and not mean-convex) which is $C^1$-close to a ball. 
    We prove the sharp geometric inequality
    \begin{equation*}
        \left(\int_{\partial K} \norm{\II}_1\de\Haus^{n-1}\right)^{\frac1{n-2}} \ge C_1(n)\Per(K)^{\frac1{n-1}} \,,
    \end{equation*}
    where $C_1(n)$ is the constant that yields the equality when $K=B_1$ (and $\norm{\II}_1$ is the sum of the absolute values of the eigenvalues of the second fundamental form $\II$ of $\partial K$). 
    Moreover, for any $\delta>0$, if $K$ is sufficiently $C^1$-close to a ball, we show the almost sharp Minkowski inequality
    \begin{equation*}
        \left(\int_{\partial K} H^+\de\Haus^{n-1}\right)^{\frac1{n-2}} \ge (C_1(n)-\delta)\Per(K)^{\frac1{n-1}} \,.
    \end{equation*}
    If $K$ is axially symmetric, we prove the Minkowski inequality with the sharp constant (i.e., $\delta=0$).
    
    We establish also the sharp quantitative stability (in the family of $C^1$-perturbations of the ball) of the volumetric Minkowski inequality
    \begin{equation}\label{eq:isop_abstract}
        \left(\int_{\partial K} H^+\de\Haus^{n-1}\right)^{\frac1{n-2}} \ge C_2(n)\abs{K}^{\frac1n} \,,
    \end{equation}
    where $C_2(n)$ is the constant that yields the equality when $K=B_1$. More precisely, we control the deviation of $K$ from a ball (in a strong norm) with the difference between the left-hand side and the right-hand side of \cref{eq:isop_abstract}.
    
    Finally, we show, by constructing a counterexample, that the mentioned inequalities are false (even for domains $C^1$-close to the ball) if one replaces $H^+$ with $H$.
\end{abstract}

\maketitle

\section{Introduction}
\subsection{Background}\label{subsec:background}
Given an open bounded domain $K\subseteq\R^n$ with smooth boundary, we denote with $\partial K$ its boundary, with $\II$ the second fundamental form of the boundary and with $H=\tr(\II)$ the mean curvature of the boundary. Moreover, for any $k=1,\dots, n-1$, let $\sigma_k(\II)$ be the $k$-th symmetric function of the eigenvalues of $\II$ (e.g., $\sigma_1(\II)=H$).

As a consequence of the Alexandrov--Fenchel inequalities (see \cite[Theorem 7.3.1]{Schneider2014}), we have the following inequalities for the \emph{quermassintegrals}\footnote{With the name quermassintegral we refer to the integrals of the symmetric functions of the curvatures of a hypersurface (in this case, the boundary of $K$). When $K$ is convex, the quermassintegrals coincide (up to a normalization) with some suitable mixed volumes involving only $K$ and the unit ball (see \cite[(5.53), (5.55)]{Schneider2014}).
We write $\abs{\sigma_k(\II)}$ instead of $\sigma_k(\II)$ as we will later discuss this inequality for non convex domains.} of a \emph{convex} body $K$ (see \cite[(7.67)]{Schneider2014})
\begin{equation}\label{eq:general_quermassintegral_ineq}
    \frac{\left(\int_{\partial K} \abs{\sigma_k(\II)}\de\Haus^{n-1}\right)^{\frac1{n-1-k}}}
    {\left(\int_{\partial K} \abs{\sigma_{k-1}(\II)}\de\Haus^{n-1}\right)^{\frac1{n-k}}}
    \ge
    \frac{\left(\int_{\S^{n-1}} \sigma_k(\id_{n-1})\de\Haus^{n-1}\right)^{\frac1{n-1-k}}}
    {\left(\int_{\S^{n-1}} \sigma_{k-1}(\id_{n-1})\de\Haus^{n-1}\right)^{\frac1{n-k}}}
    \quad \text{for all $k=1,\dots, n-1$. }
\end{equation}
In other words, if $\int_{\partial K}\abs{\sigma_{k-1}(\II)}$ is fixed, the minimum of $\int_{\partial K}\abs{\sigma_k(\II)}$ in the family of convex sets is achieved when $K$ is a ball. 
The case $k=0$ of \cref{eq:general_quermassintegral_ineq} (which is not well-defined with the above notation) corresponds to the classical isoperimetric inequality
\begin{equation}\label{eq:isop_ineq}
\frac{\Per(K)^{\frac1{n-1}}}{\abs{K}^{\frac1n}}
\ge
\frac{\Per(B_1)^{\frac1{n-1}}}{\abs{B_1}^{\frac1n}} \,.
\end{equation}
Concatenating the isoperimetric inequality with \cref{eq:general_quermassintegral_ineq}, one obtains
\begin{equation}\label{eq:quermassintegral_volume}
    \frac{\left(\int_{\partial K} \abs{\sigma_k(\II)}\de\Haus^{n-1}\right)^{\frac1{n-1-k}}}
    {\abs{K}^{\frac1n}}
    \ge
    \frac{\left(\int_{\S^{n-1}} \sigma_k(\id_{n-1})\de\Haus^{n-1}\right)^{\frac1{n-1-k}}}
    {\abs{B_1}^{\frac1n}}
    \quad \text{for all $k=1,\dots, n-1$. }
\end{equation}

While the isoperimetric inequality \cref{eq:isop_ineq} is well-known for arbitrary domains in $\R^n$ (as its stability, see \cite{FuscoMaggiPratelli2008}), this is not the case for the other inequalities between quermassintegrals. 
Let us give a schematic review of the literature on the topic.
\begin{itemize}
    \item For a $k_0$-convex (i.e., $\sigma_i(\II)\ge 0$ for all $1\le k\le k_0$) star-shaped domain $K$, the inequalities \cref{eq:general_quermassintegral_ineq} with $k\le k_0$ are proven in \cite{GuanLi2009} via a suitable flow of the hypersurface $\partial K$ that converges to a sphere (see also \cite{Gerhardt1990,Urbas1990}).
    \item For a $(k_0+1)$-convex domain, the inequalities \cref{eq:general_quermassintegral_ineq} with $k\le k_0$ are proven in \cite{ChangWang2013} with the method of optimal transport albeit with a non-sharp constant (that is, the right-hand side is replaced by a positive constant).
    \item In dimension $n=3$, the inequality \cref{eq:general_quermassintegral_ineq} holds for axially-symmetric domains \cite{DalphinHenrotMasnouTakahashi2016}.
    \item For $(k_0+1)$-convex domains in $\R^n$, the inequalities \cref{eq:quermassintegral_volume} with $k\le k_0$ have been established in \cite{ChangWang2014,Qiu2015} with optimal transport techniques.
    \item For an outward-minimizing (which is a more restrictive property than mean-convexity) domain $K$, the case $k=1$ of \cref{eq:general_quermassintegral_ineq} is due to Huisken (and reported in \cite[Theorem 5, Lemma 8]{FreierSchwartz2014}, see also \cite[Theorem 1.1]{Wei2018}). The proof uses in a fundamental way the theory of the (weak) inverse mean curvature flow developed in \cite{HuiskenIlmanen2001}. The same result is also shown in \cite{AgostinianiFogagnoloMazzieri2021} with an alternative proof which avoids all the regularity issues which arise when applying the inverse mean curvature flow in dimension higher than $7$.
    As a direct consequence of this result, one deduces \cref{eq:quermassintegral_volume} with $k=1$ for \emph{arbitrary} domains $K$ in $\R^n$.
    \item The case $k=1$ of \cref{eq:general_quermassintegral_ineq} for general domains was established, with a non-sharp constant, in \cite{MichaelSimon1973}. Subsequently \cite{Castillon2010} proved again the result with an improved constant (via optimal transport methods) and recently Brendle found a proof via the ABP method that provides the sharp constant for hypersurfaces with boundary\footnote{Brendle proves the result for a generic $\Sigma\subseteq\R^n$ (possibly with boundary) instead of the more restricted family $\Sigma=\partial K$. In the family he considers, the optimal shapes for the inequality are flat disks (instead of spheres) and thus the constant he obtains is not the sharp one in our setting.} (see \cite{Brendle2019}).
\end{itemize}

As it shall be clear from this quick review of the literature, the validity of \cref{eq:general_quermassintegral_ineq} for general domains is completely open (if the sharp constant is desired).

We will focus on \cref{eq:general_quermassintegral_ineq} and \cref{eq:quermassintegral_volume} with $k=1$, known in the literature respectively as Minkowski inequality 
\begin{equation}\label{eq:minkowski}
    \frac{\left(\int_{\partial K} H^+\de\Haus^{n-1}\right)^{\frac1{n-2}}}
    {\Per(K)^{\frac1{n-1}}}
    \ge
    \frac{\left((n-1)\Haus^{n-1}(\S^{n-1})\right)^{\frac1{n-2}}}
    {\Per(B_1)^{\frac1{n-1}}}
\end{equation}
and \emph{volumetric} Minkowski inequality
\begin{equation}\label{eq:minkowski_vol}
    \frac{\left(\int_{\partial K} H^+\de\Haus^{n-1}\right)^{\frac1{n-2}}}
    {\abs{K}^{\frac1{n}}}
    \ge
    \frac{\left((n-1)\Haus^{n-1}(\S^{n-1})\right)^{\frac1{n-2}}}
    {\abs{B_1}^{\frac1{n}}} \,,
\end{equation}
where $H^+\defeq \max(H, 0)$ denotes the positive part of the mean curvature\footnote{Here we are silently replacing $\abs{H}$, which was appearing in \cref{eq:general_quermassintegral_ineq,eq:quermassintegral_volume}, with $H^+$. The inequalities with $H^+$ are stronger and they are the ones we are going to focus on. Let us mention that, with the inverse mean curvature flow, one proves \cref{eq:minkowski_vol} and then \cref{eq:quermassintegral_volume} with $k=1$ follows as a consequence.}.

The topic of the present paper is to investigate the validity and the stability of these Minkowski-type inequalities for domains $K$ that are $C^1$-perturbations of the ball.
Our investigation was inspired by the work \cite{Fuglede89} where a similar analysis is performed for the isoperimetric inequality (establishing its stability in the family of $C^1$-perturbations of the ball). Several new ideas are necessary since the functionals we consider (e.g., the integral of the mean curvature) depend on the curvatures of $\partial K$ but we do not assume any control on such curvatures (hence a Taylor expansion \`a la Fuglede up to second order is not feasible).

In \cite[Appendix E]{ChodoshEichmair2022}, a Minkowski inequality with an additional term which depends on the trace-free part of the second fundamental form is established in dimension $n=3$ for $C^1$-perturbations (see \cite[Proposition E.4]{ChodoshEichmair2022}) with methods similar to ours.

In order to put in the right context our work, let us briefly review the work of Fuglede.

\subsubsection{Fuglede's work on the stability of the isoperimetric inequality}
Given a star-shaped domain $K\subseteq\R^n$, let $u:\S^{n-1}\to(-1,\infty)$ be the function such that $(1+u(x))x \in\partial K$ for any $x\in\S^{n-1}$. We assume that $\norm{u}_{C^1}\ll 1$. In this setting, one has the following formulas for the volume and the perimeter of $K$ (see \cref{lem:formulas}):
\begin{align*}
    \abs{K} &= \frac1n\int_{\S^{n-1}} (1+u)^n 
    =  \abs{B_1}+\int_{\S^{n-1}}u + \frac{n-1}2\int_{\S^{n-1}}u^2 +\smallo\Big(\int u^2+\abs{\nabla u}^2\Big)
    \,,\\
    \Per(K) &= 
    \int_{\S^{n-1}} (1+u)^{n-2}\sqrt{(1+u)^2+\abs{\nabla u}^2} \\
     &=
    \Per(B_1) + \int_{\S^{n-1}}(n-1)u + \frac{(n-1)(n-2)}{2}u^2
    +\frac12\abs{\nabla u}^2
    +\smallo\Big(\int u^2+\abs{\nabla u}^2\Big)
    \,.
\end{align*}
Thus, if we assume $\abs{K}=\abs{B_1}$ (which is true up to scaling), we find that the isoperimetric deficit can be written as
\begin{equation}\label{eq:isop_deficit_fuglede}
    \Per(K)-\Per(B_1) = \frac12 \int_{\S^{n-1}}\abs{\nabla u}^2 - (n-1)u^2 +\smallo\Big(\int u^2+\abs{\nabla u}^2\Big) \,.
\end{equation}
Since $(n-1)$ is the first (nonzero) eigenvalue of the Laplacian on the sphere, the left-hand side of \cref{eq:isop_deficit_fuglede} may be negative in general. 
The only missing observation is that we may assume (up to translation) that the barycenter of $K$ is the origin. From this assumption we deduce that $u$ is (almost) orthogonal to all the eigenfunctions of the Laplacian with eigenvalue equal to $n-1$ and therefore $u$ satisfies the stronger Poincaré inequality (recall that $2n$ is the next eigenvalue after $n-1$)
\begin{equation*}
    \int_{\S^{n-1}}\abs{\nabla u}^2 \ge (2n-\smallo(1))\int_{\S^{n-1}}u^2 \,.
\end{equation*}
This latter inequality, together with \cref{eq:isop_deficit_fuglede} is sufficient to prove the stability of the isoperimetric inequality for domains $C^1$-close to $B_1$. In other words the isoperimetric deficit
\begin{equation*}
    \frac{\Per(K)^{\frac1{n-1}}}{\abs{K}^{\frac1n}}
    -
    \frac{\Per(B_1)^{\frac1{n-1}}}{\abs{B_1}^{\frac1n}}
\end{equation*}
controls (quantitatively and in a strong norm) the distance between $K$ and a ball (with center equal to the barycenter of $K$ and volume equal to the volume of $K$).

\subsection{Results}
Our plan is to repeat Fuglede's argument for the Minkowski-type inequalities \cref{eq:minkowski,eq:minkowski_vol}.
Let us remark that, differently from what happens in Fuglede's argument, since we consider quantities that depend on the curvatures of $\partial K$, the assumption of $C^1$-closeness to the ball does not control our quantities. 

We say that a domain $K\subseteq\R^n$ (with smooth boundary) is a $C^1$ $\eps$-perturbation of a ball if, for some $\bar x\in\R^n$, 
\begin{equation}\label{eq:def_eps_perturbation}
    \norm*{\nu(x)-\frac{x-\bar x}{\abs{x-\bar x}}}_{L^{\infty}(\partial K)} \le \eps \,,
\end{equation}
where $\nu(x)$ denotes the normal vector to $\partial K$ at the point $x\in\partial K$.

Let us remark that the boundary $\partial K$ of a $C^1$ $\eps$-perturbation of a ball may have unbounded (both positive and negative) curvatures and may not be convex or mean-convex. 
Notice that if $K$ is a $C^1$ $\eps$-perturbation of a ball with $\bar x$ and $\eps$ sufficiently small, then it is star-shaped.

We prove the following sharp Minkowski-like inequality for perturbations of a ball.
\begin{theorem}\label{thm:main_statement_nuclear}
    Given $n\ge 3$, there is an $\eps_1=\eps_1(n)$ such that the following statement holds.
    If $K\subseteq\R^n$ is a $C^1$ $\eps_1$-perturbation of a ball, it holds
    \begin{equation}\label{eq:main_statement_nuclear}
        \frac{\left(\int_{\partial K} \norm{\II}_1 \de\Haus^{n-1}\right)^{\frac1{n-2}}}
        {\Per(K)^{\frac{1}{n-1}}}
        \ge
        \frac{\left((n-1)\Haus^{n-1}(\S^{n-1})\right)^{\frac1{n-2}}}
        {\Per(B_1)^{\frac{1}{n-1}}} \,,
    \end{equation}
    where $\norm{\II}_1$ is the \emph{nuclear norm} of the second fundamental form of $\partial K$ (i.e., the sum of the absolute values of its eigenvalues).
\end{theorem}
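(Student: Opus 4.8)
The plan is to carry out Fuglede's perturbative argument with the Minkowski deficit in place of the isoperimetric one, using a divergence identity to get around the complete absence of any bound on the curvatures of $\partial K$.

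First I would reduce to a normal graph. Both sides of \cref{eq:main_statement_nuclear} are invariant under translations and dilations of $K$. Rescaling so that $\abs{K}=\abs{B_1}$ and translating so that the barycenter of $K$ is the origin (which keeps the centre $\bar x$ of \cref{eq:def_eps_perturbation} within $\mathrm O(\eps_1)$ of the origin, so that we may take $\bar x=0$), I write $\partial K=\{(1+u(\theta))\theta:\theta\in\S^{n-1}\}$. The closeness hypothesis \cref{eq:def_eps_perturbation} is equivalent to $\abs{\nabla u}\le C(n)\eps_1(1+u)$, which bounds $\abs{\nabla u}$ and the oscillation of $u$; together with $\abs{K}=\abs{B_1}$ this gives $\norm{u}_{C^1(\S^{n-1})}\to0$ as $\eps_1\to0$ and $\abs{\bar u}=\mathrm O(\eps_1^2)$ for the average $\bar u$ of $u$. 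The same bound yields on $\partial K$ the crucial near-radiality
\begin{equation*}
    \langle x,\nu(x)\rangle=\frac{(1+u)^{2}}{\sqrt{(1+u)^{2}+\abs{\nabla u}^{2}}}\in\Big[\big(1-C(n)\eps_1^{2}\big)\abs{x},\ \abs{x}\Big]\,,
\end{equation*}
so $\langle x,\nu\rangle>0$ (in particular $K$ is star-shaped), and the barycenter normalisation gives the improved Poincaré inequality $\int_{\S^{n-1}}\abs{\nabla u}^{2}\ge(2n-\smallo(1))\int_{\S^{n-1}}(u-\bar u)^{2}$.

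Next, the Minkowski identity. The tangential divergence theorem on the closed hypersurface $\partial K$ applied to the position field $x$ (whose tangential divergence equals $n-1$) gives $\int_{\partial K}H\langle x,\nu\rangle\,\de\Haus^{n-1}=(n-1)\Per(K)$; since $\norm{\II}_1\ge H$ pointwise and $\langle x,\nu\rangle>0$, it follows that $\int_{\partial K}\norm{\II}_1\langle x,\nu\rangle\,\de\Haus^{n-1}\ge(n-1)\Per(K)$. Writing $r\defeq\big(\Per(K)/\Haus^{n-1}(\S^{n-1})\big)^{1/(n-1)}$ for the radius of the ball with the same perimeter as $K$, inequality \cref{eq:main_statement_nuclear} is equivalent to $\int_{\partial K}\norm{\II}_1\,\de\Haus^{n-1}\ge(n-1)\Per(K)/r$, so it is enough to prove
\begin{equation*}
    \int_{\partial K}\norm{\II}_1\,\big(\langle x,\nu\rangle-r\big)\,\de\Haus^{n-1}\le0\,.
\end{equation*}
If $\norm{\II}_1$ were replaced by the constant $n-1$, this would be exactly the isoperimetric inequality $(n\abs{K})^{n-1}\Haus^{n-1}(\S^{n-1})\le\Per(K)^n$ --- a reassuring check that $r$ is the right normalisation.

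The hard part is to prove the displayed weighted inequality with the \emph{sharp} constant. The crude bound $\langle x,\nu\rangle-r\le\max_{\partial K}\abs{x}-r=\mathrm O(\eps_1)$ is useless here: it costs an error of order $\eps_1\int_{\partial K}\norm{\II}_1$, whereas the genuine room in the inequality is only of order $\eps_1^2$. One must retain every second-order quantity: the slack $\norm{\II}_1-H=2\sum_{\kappa_i<0}\abs{\kappa_i}\ge0$, the slack $\abs{x}-\langle x,\nu\rangle=\mathrm O(\eps_1^2)\abs{x}\ge0$, and the Jensen/log-convexity defect relating the moments $\int_{\S^{n-1}}(1+u)^{k}\,\de\Haus^{n-1}$ (the last being what forces the use of the isoperimetric inequality above). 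Since the curvatures are uncontrolled, the Fuglede-type Taylor expansion cannot be performed globally; I would split $\partial K=\mathcal G\cup\mathcal B$ according to a dimensional threshold on $\abs{\II-\id}$, expand to second order on the good set $\mathcal G$ --- where $\norm{\II}_1=H$ and $\norm{u}_{C^{2}}$ is small, so $H$ is an explicit expandable function of $u$, and the principal part is non-positive with quantitative room $-c(n)\int_{\mathcal G}(\abs{\nabla u}^{2}+(u-\bar u)^{2})$ (by the improved Poincaré inequality) up to a cubic error --- and on the bad set $\mathcal B$ use only the two displayed non-negative slacks, the bound $\abs{\langle x,\nu\rangle-r}=\mathrm O(\eps_1)$, and the divergence identity for the field $x/\abs{x}$ (which gives a lower bound for $\int_{\mathcal B}\norm{\II}_1$ in terms of controlled geometric quantities), so that the $\mathcal B$-contribution is absorbed into the room left by $\mathcal G$. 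Reconciling the choice of threshold, the boundary contributions along $\partial\mathcal G$, and the estimates on the two pieces is, I expect, the main obstacle. Once the weighted inequality holds, combining it with the Minkowski identity proves \cref{eq:main_statement_nuclear}.
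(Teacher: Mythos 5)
Your reduction via the first Minkowski identity $\int_{\partial K}H\langle x,\nu\rangle=(n-1)\Per(K)$ and the monotonicity $\norm{\II}_1\ge H$, $\langle x,\nu\rangle>0$ is correct, and so is the observation that \cref{eq:main_statement_nuclear} would follow from the weighted inequality $\int_{\partial K}\norm{\II}_1(\langle x,\nu\rangle-r)\le 0$; the sanity check identifying the case $\norm{\II}_1\equiv n-1$ with the isoperimetric inequality is also right. This is a genuinely different starting point from the paper, which never invokes the Minkowski identity and instead works directly with the radial profile $u$, expanding $\int_{\partial K}H$ via \cref{eq:integral_H_formula}. You have also correctly isolated that the crude bound $\langle x,\nu\rangle-r=\bigo(\eps_1)$ only gives the inequality up to an $\bigo(\eps_1)$ loss (essentially \cref{thm:main_statement_almost_sharp}), and that the sharp constant requires a cancellation at order $\eps_1^2$.

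The gap is in the last paragraph: the good/bad decomposition you sketch does not, as stated, close. On the bad set $\mathcal B$ where $\abs{\II-\id}$ is large, you propose to use only the nonnegativity of the slacks, the bound $\abs{\langle x,\nu\rangle-r}=\bigo(\eps_1)$, and a lower bound on $\int_{\mathcal B}\norm{\II}_1$. But the quantity you need to bound from above is $\int_{\mathcal B}\norm{\II}_1\,(\langle x,\nu\rangle-r)$, and there the weight $\langle x,\nu\rangle-r$ can be positive of size $\eps_1$ on part of $\mathcal B$ while $\int_{\mathcal B}\norm{\II}_1$ is completely uncontrolled (this is the whole point of the $C^1$-only hypothesis). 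A lower bound on $\int_{\mathcal B}\norm{\II}_1$ points the wrong way, and there is no upper bound; so the $\mathcal B$-contribution can be arbitrarily larger than the $\bigo(\eps_1^2)$ room produced on $\mathcal G$, and nothing in the sketch prevents this. The slack $\norm{\II}_1-H\ge 0$ also does not help directly, because $\int H(\langle x,\nu\rangle-r)=(n-1)\Per(K)-r\int H$ is not sign-controlled without already knowing a lower bound on $\int H$.

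For comparison, the paper avoids a spatial decomposition of $\partial K$ entirely. It normalizes $\Per(K)=\Per(B_1)$, writes everything in terms of $u$, and isolates the problematic cubic term $\int(1+u)^{n-3}\nabla^2u[\nabla u,\nabla u]/((1+u)^2+\abs{\nabla u}^2)$. Two devices then make the constants work: (i) a \emph{spectral} decomposition $u=u_1+u_2$ into low/high Laplace frequencies, together with \cref{lem:freq_decomposition}, which shows that only $u_2$ matters in the cubic term and thus makes the much stronger Poincar\'e inequality $\int\abs{\nabla u_2}^2\ge\lambda\int u_2^2$ available; and (ii) the integration-by-parts interpolation of \cref{lem:int_parts_interpol}, which converts an upper bound on $\nabla^2 u$ (read off from $\norm{\II}_1-H$ via \cref{eq:second_fund_formula}) into $\int\nabla^2u[\nabla u,\nabla u]\ge-\tfrac{n-2}3\int\abs{\nabla u}^2$ rather than the na\"ive $-\tfrac{n-1}2\int\abs{\nabla u}^2$, which would not suffice. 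If you want to pursue your reformulation, you would still need some substitute for these two ingredients; the curvature-threshold splitting by itself does not supply them.
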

The estimate \cref{eq:main_statement_nuclear} is weaker than \cref{eq:minkowski} because $\norm{\II}_1 \ge |H|$ (with equality if and only if $K$ is convex).

However, under the assumption of axial symmetry, we can prove the sharp Minkowski inequality.
\begin{theorem}\label{thm:main_statement_axial}
    Given $n\ge 3$, there is an $\eps_2=\eps_2(n)$ such that the following statement holds. If $K\subseteq\R^n$ is a $C^1$ $\eps_2$-perturbation of a ball and it is axially symmetric (i.e., it is invariant under rotations around a fixed line), then
    \begin{equation*}
        \frac{\Big(\int_{\partial K} H^+\de \Haus^{n-1}\Big)^{\frac1{n-2}}}{\Per(K)^{\frac1{n-1}}}
        \ge
        \frac{\left((n-1)\Haus^{n-1}(\S^{n-1})\right)^{\frac1{n-2}}}
        {\Per(B_1)^{\frac{1}{n-1}}} \,,
    \end{equation*}
    where $H^+=\max(H, 0)$.
\end{theorem}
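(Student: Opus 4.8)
The plan is to run a one--dimensional version of Fuglede's scheme. After translating $\bar x$ to the origin and dilating, the $C^1$ $\eps_2$-closeness makes $K$ star-shaped, so $\partial K=\{(1+u(x))x:x\in\S^{n-1}\}$ with $\norm{u}_{C^1}$ small; axial symmetry means $u=u(\phi)$ depends only on the polar angle $\phi\in[0,\pi]$, with $\dot u(0)=\dot u(\pi)=0$ by smoothness. The profile curve $\gamma(\phi)=(R(\phi),z(\phi))$, with $R=(1+u)\sin\phi$ and $z=(1+u)\cos\phi$, has two distinct principal curvatures: the meridian curvature $\kappa_1$ (the signed curvature of $\gamma$, with multiplicity $1$) and the parallel curvature $\kappa_2=\nu_h/R$ (with multiplicity $n-2$, where $\nu_h$ is the component of $\nu$ pointing away from the axis), so $H=\kappa_1+(n-2)\kappa_2$; moreover $\de\Haus^{n-1}=\abs{\S^{n-2}}R^{n-2}\de s$ with $\de s=\sqrt{(1+u)^2+\dot u^2}\,\de\phi$.

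The first step is to observe that, on axially symmetric domains, both $\Per(K)$ and $\int_{\partial K}H\de\Haus^{n-1}$ are \emph{first-order} functionals of $u$. For the perimeter this is \cref{lem:formulas}. For the mean-curvature integral, $\kappa_2\,R^{n-2}\de s=\nu_h R^{n-3}\de s$ involves only $u,\dot u$, whereas $\kappa_1\de s=-\de\alpha$ is exact, where $\alpha(\phi)=-\phi+\arctan\!\big(\dot u/(1+u)\big)$ is the turning angle of the tangent; integrating by parts (the boundary contributions vanish because $R$ vanishes at the poles and $n\ge3$) yields $\int_{\partial K}\kappa_1\de\Haus^{n-1}=(n-2)\abs{\S^{n-2}}\int_0^\pi\alpha\,R^{n-3}\dot R\,\de\phi$. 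Hence $\int_{\partial K}H\de\Haus^{n-1}$ is a smooth function of $(u,\dot u)$ and admits a Taylor expansion about $u\equiv 0$. Dilating so that $\Per(K)=\Per(B_1)$ and translating along the axis so that the barycenter of $K$ is the origin --- which makes $u$ almost $L^2$-orthogonal to the constants and to $\cos\phi$, i.e.\ to the first two axially symmetric spherical harmonics --- the deficit $\int_{\partial K}H\de\Haus^{n-1}-(n-1)\abs{\S^{n-1}}$ takes the form $\mathcal B[u,u]+\smallo(\norm{u}_{C^1}^2)$ for an explicit quadratic form $\mathcal B$, and one is reduced to understanding $\mathcal B$ on this constrained class.

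The delicate point is the passage from $H$ to $H^+$, which is unavoidable because the inequality with $H$ fails for general (non-symmetric) domains. Here I would again use the exactness of $\kappa_1\de s$: writing $\{H\ge0\}$ as a union of intervals $I_j=[\phi_j,\phi_j']$, one has $\int_{\partial K}H^+\de\Haus^{n-1}=\abs{\S^{n-2}}\sum_j\int_{I_j}(\kappa_1+(n-2)\kappa_2)R^{n-2}\de s$, and integrating $\kappa_1$ by parts on each $I_j$ produces first-order bulk terms together with boundary terms at the points where $H=0$. The structural input is that away from the poles $\kappa_2=\tfrac1{1+u}+O(\eps)>0$, hence on $\{H<0\}$ (off the poles) one has $\kappa_1<0$; this sign, fed into the boundary bookkeeping, should show that $\int_{\partial K}H^+\de\Haus^{n-1}$ exceeds $\int_{\partial K}H\de\Haus^{n-1}$ by a nonnegative quantity that is at least $-\mathcal B[u,u]$ whenever the latter is negative (heuristically: any oscillation of $u$ that hurts $\mathcal B$ must create a region with $H<0$, which is exactly the region discarded in passing to $H^+$). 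Near the two poles, where $\kappa_2$ is not controlled, one treats a thin cap $\{\sin\phi<\delta\}$ with the crude bound $\int H^+\ge\int H$ and checks that the error so incurred is negligible --- of order $\smallo(\norm{u}_{C^1}^2)$ plus a quantity that can be made arbitrarily small, since the expansions above are uniform in $K$. I expect the main obstacle to be precisely this quantitative balancing: establishing that the positivity lost in $\mathcal B$ on the ``bad'' axially symmetric modes is recovered, term by term, from the discarded negative part of $H$, uniformly down to the poles.
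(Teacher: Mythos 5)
Your reduction to the one-dimensional profile $V(\theta)$, the observation that $\kappa_1\,\de s$ is exact so that one integration by parts removes the second derivative, and the intuition that the discarded negative part of $H$ must offset the damaging pole contributions, are all in the spirit of the paper's proof. However, there is a genuine gap at the pole estimate, which is the technical heart of the argument.

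After the integration by parts the deficit is \emph{not} $\mathcal B[u,u]+\smallo(\norm{u}_{C^1}^2)$ with $\mathcal B$ a bounded quadratic form: one is left with a cubic term proportional to $\int_0^\pi \dot V^3\cos\theta\,\sin^{n-3}\theta\,\de\theta$. Because the exponent of $\sin\theta$ here is $n-3$ while it is $n-2$ in $\int_{\S^{n-1}}\abs{\nabla u}^2=\Haus^{n-2}(\S^{n-2})\int_0^\pi\dot V^2\sin^{n-2}\theta\,\de\theta$, the cap contribution to this cubic term can be arbitrarily larger than the Dirichlet energy on the cap: taking $\dot V\equiv\eps$ on $[\delta,2\delta]$ with $\delta\ll\eps$ gives a ratio of order $\eps/\delta$, which blows up as $\delta\to0$. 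Thus the crude bound $\int_{\text{cap}}H^+\ge\int_{\text{cap}}H$ with ``negligible error'' is unjustified, and Fuglede's scheme with the caps simply cut off does not close. The missing ingredient is an ODE-type estimate near each pole,
\begin{equation*}
\dot V(\theta)\le(1+\omega(\eps))\,\theta+\frac{C_0}{\theta^{\,n-2}}\int_{\partial K}H^-\qquad\text{for }0\le\theta\le\theta_0 ,
\end{equation*}
obtained by expressing $H$ near the pole, isolating the exact derivative $\frac{\de}{\de\theta}\big(\dot V\sin^{n-2}\theta\big)$, and integrating over the maximal interval where $\dot V\ge0$. This is precisely what quantifies your heuristic that a pole oscillation hurting the quadratic form must create a region with $H<0$: it turns the cap contribution into $(\alpha+\omega(\eps))\int_{\text{cap}}\abs{\nabla u}^2+\omega(\eps)\int_{\partial K}H^-$ with $\alpha$ close to $1$, and the second term is absorbed by the gain $\int_{\partial K}H^+-\int_{\partial K}H=\int_{\partial K}H^-$. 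Even then a low/high frequency split $u=u_1+u_2$ is still needed, so that the cap energy of the smooth low-frequency piece is $\omega(\eps)\int\abs{\nabla u_1}^2$ while the high-frequency piece enjoys a strong Poincar\'e inequality dominating the residual $-\frac{(n-1)(n-2)}{2}\int u_2^2$. None of the ODE estimate, the explicit tracking of the constant $\alpha$, or the frequency decomposition appears in your sketch, and without them the ``boundary bookkeeping'' you invoke cannot be made rigorous.
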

Let us remark that, even if the statement of \cref{thm:main_statement_axial} is \emph{morally} one-dimensional, our proof uses crucially various tools (e.g., the Poincaré inequality on the sphere, the decomposition in eigenfunctions of $-\lapl$) which are not one-dimensional.

We show also the sharp quantitative stability of \cref{eq:minkowski_vol} for  perturbations of a ball.
\begin{theorem}\label{thm:main_statement_stability}
    Given $n\ge 4$, there are two constants $\eps_3=\eps_3(n)>0$ and $c(n)>0$ such that the following statement holds. 
    If $K\subseteq\R^n$ is a $C^1$ $\eps_3$-perturbation of a ball, it holds, for some $\bar x\in\R^n$,
    \begin{equation}\label{eq:main_statement_stability}
        \frac{\left(\int_{\partial K} H^+ \de\Haus^{n-1}\right)^{\frac1{n-2}}}
        {\abs{K}^{\frac{1}{n}}}
        -
        \frac{\left((n-1)\Haus^{n-1}(\S^{n-1})\right)^{\frac1{n-2}}}
        {\abs{B_1}^{\frac{1}{n}}}
        \ge c(n) \fint_{\partial K} \abs*{\nu - \frac {x-\bar x}{\abs{x-\bar x}}}^2 \de\Haus^{n-1} \,,
    \end{equation}
    where $H^+=\max(0, H)$.
\end{theorem}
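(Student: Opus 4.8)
The plan is to mimic Fuglede's analysis from the introduction, but now for the volumetric Minkowski functional rather than the perimeter. Write $\partial K$ as a graph over $\S^{n-1}$ via $u\colon\S^{n-1}\to(-1,\infty)$ with $(1+u(x))x\in\partial K$; the $C^1$ $\eps_3$-perturbation hypothesis translates into a smallness bound on $\norm{u}_{C^1}$ (after recentering), and by scaling and translating we may normalize so that $|K|=|B_1|$ and the barycenter of $K$ is at the origin. The barycenter condition forces $u$ to be almost $L^2$-orthogonal to the first nonzero eigenspace of $-\lapl_{\S^{n-1}}$ (eigenvalue $n-1$), hence the improved Poincaré inequality $\int|\nabla u|^2\ge(2n-\smallo(1))\int u^2$ holds. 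The quantity we must bound from below is then
\begin{equation*}
    \left(\int_{\partial K} H^+\de\Haus^{n-1}\right)^{\frac1{n-2}} - \left((n-1)\Haus^{n-1}(\S^{n-1})\right)^{\frac1{n-2}} \,,
\end{equation*}
since the normalization makes the second term of the deficit constant; and the right-hand side $\fint_{\partial K}|\nu - x/|x||^2$ is, up to constants and $\smallo$-errors, comparable to $\int_{\S^{n-1}}|\nabla u|^2$ (the tangential part of $\nu - x/|x|$ is governed by $\nabla u$).

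The central difficulty — and the reason the argument is not a verbatim copy of Fuglede — is that $\int_{\partial K} H^+$ depends on the curvature of $\partial K$, which is \emph{not} controlled by $C^1$-closeness; a naive second-order Taylor expansion is unavailable. The way around this, which I expect to be the technical heart of the paper, is to avoid expanding $H$ pointwise and instead integrate by parts: $\int_{\partial K} H \de\Haus^{n-1}$ can be rewritten (via the divergence theorem applied to the position field, or to $\nu$ itself) as an integral involving only $\nu$ and first derivatives of the parametrization, so that it is genuinely a first-order quantity in $u$. One then shows $\int_{\partial K} H^+ \ge \int_{\partial K} H$ (trivially) and, more importantly, that replacing $H$ by $H^+$ costs little: the set where $H<0$ is small and contributes a lower-order error under the perturbation hypothesis — here is precisely where $H^+$ rather than $H$ is essential, consistent with the counterexample mentioned in the abstract. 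After this reduction, one obtains an expansion of the form
\begin{equation*}
    \int_{\partial K} H^+ \ge \Haus^{n-1}(\S^{n-1})\Big((n-1) + a_n \fint u + b_n\fint u^2 + c_n \fint |\nabla u|^2\Big) - \smallo\Big(\int u^2 + |\nabla u|^2\Big)
\end{equation*}
for explicit dimensional constants, together with the constraint coming from $|K|=|B_1|$ which pins down $\fint u$ to leading order in terms of $\fint u^2$.

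Combining the expansion of $\int_{\partial K}H^+$ with the volume constraint (to eliminate the linear term $\fint u$) and raising to the power $1/(n-2)$, the deficit becomes, to leading order, a quadratic form in $u$ of the shape $\alpha_n\int|\nabla u|^2 - \beta_n\int u^2$ with $\alpha_n,\beta_n>0$. The improved Poincaré inequality $\int|\nabla u|^2\ge(2n-\smallo(1))\int u^2$ then shows this quadratic form is bounded below by $c(n)\int|\nabla u|^2$, provided $\alpha_n(2n) > \beta_n$ — verifying this strict inequality is a finite dimensional computation, and it is exactly the point where the dimensional restriction $n\ge 4$ is expected to enter (in low dimensions the constants may fail to cooperate, or the spectral gap $2n$ versus $n-1$ may not suffice). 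Finally, translating $\int|\nabla u|^2$ back to $\fint_{\partial K}|\nu - (x-\bar x)/|x-\bar x||^2$ and absorbing the $\smallo$-terms by choosing $\eps_3$ small enough yields \cref{eq:main_statement_stability}. The main obstacle, to reiterate, is carrying out the curvature integration by parts cleanly enough that every error term is genuinely $\smallo(\int u^2 + |\nabla u|^2)$ uniformly over all admissible $K$, without ever differentiating $u$ twice.
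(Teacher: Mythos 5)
Your proposal sets up the right normalizations and the Fuglede-type reduction, and correctly flags the main obstacle: $\int_{\partial K}H$ depends on the curvature of $\partial K$, which is not controlled by $C^1$-closeness. But your proposed way around it --- that a divergence-theorem integration by parts rewrites $\int_{\partial K}H$ as a genuinely \emph{first-order} quantity in $u$ --- is false, and the gap is fatal. After all available integrations by parts (see \eqref{eq:integral_H_formula}), there remains the second-order cubic term
\[
\int_{\S^{n-1}}(1+u)^{n-3}\,\frac{\nabla^2 u[\nabla u,\nabla u]}{(1+u)^2+\abs{\nabla u}^2}\,,
\]
which persists precisely because the area element carries the weight $\sqrt{1+\abs{v}^2}$ depending on $\nabla u$. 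Controlling this term is the heart of the proof, and your proposal offers no mechanism for it. The paper's mechanism is the frequency decomposition of Lemma \ref{lem:freq_decomposition}: write $u=u_1+u_2$ with $u_1$ the projection onto eigenfunctions of $-\lapl$ with eigenvalue below a threshold $\lambda(n)$ and $u_2$ the remainder. Then $\norm{\nabla^2 u_1}_\infty=\omega(\eps)$ (finite-dimensional subspace) while $u_2$ obeys the much stronger Poincaré inequality $\int\abs{\nabla u_2}^2\ge\lambda\int u_2^2$. Coupled with the divergence form \eqref{eq:mean_curvature_divergence} of $H$, which gives the one-sided bound $\div(\nabla u/\sqrt{1+\abs{v}^2})\le n-1+\omega(\eps)+2H^-$, the lemma yields a lower bound on the cubic term of the shape $-\tfrac{n-1}{2}\int\abs{\nabla u_2}^2$ plus small errors: only the \emph{high-frequency} gradient carries a negative coefficient, which is essential because the naive bound $-\tfrac{n-1}{2}\int\abs{\nabla u}^2$ would not produce a nonnegative quadratic form.

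Two further mischaracterizations in your sketch: (i) one does not argue that the set $\{H<0\}$ is small; $H^-$ is carried explicitly, appears in the cubic-term estimate with an $\omega(\eps)$ coefficient, and is absorbed by adding $\tfrac12\int_{\S^{n-1}}H^-$ to both sides via $\int_{\partial K}H^+=\int_{\partial K}H+\int_{\partial K}H^-$; and (ii) the restriction $n\ge4$ is not about the spectral gap $2n$ versus $n-1$ --- it is exactly what makes the residual coefficient $(n-2)-\tfrac{n-1}{2}=\tfrac{n-3}{2}$ in front of $\int\abs{\nabla u_2}^2$ strictly positive.
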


Finally, we prove the Minkowski inequality for $C^1$-perturbations of the ball but with an \emph{almost sharp} constant.

\begin{theorem}\label{thm:main_statement_almost_sharp}
    Given $n\ge 3$, for any $\delta>0$ there is an $\eps_4=\eps_4(n, \delta)$ such that the following statement holds.
    If $K\subseteq\R^n$ is a $C^1$ $\eps_4$-perturbation of a ball, it holds
    \begin{equation*}
        \frac{\left(\int_{\partial K} H^+ \de\Haus^{n-1}\right)^{\frac1{n-2}}}
        {\Per(K)^{\frac{1}{n-1}}}
        \ge
        \frac{\left((n-1)\Haus^{n-1}(\S^{n-1})\right)^{\frac1{n-2}}}
        {\Per(B_1)^{\frac{1}{n-1}}} - \delta \, .
    \end{equation*}
\end{theorem}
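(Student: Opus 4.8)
The plan is to bypass \emph{any} control on the curvatures of $\partial K$ by using the Minkowski identity
\begin{equation}\label{eq:mink-id-sketch}
    \int_{\partial K} H\,\langle x-\bar x,\nu(x)\rangle\,\de\Haus^{n-1}(x) = (n-1)\Per(K)\,,
\end{equation}
which holds for every bounded open $K\subseteq\R^n$ with smooth boundary and every $\bar x\in\R^n$ --- no convexity or mean-convexity is needed --- and follows by integrating the tangential identity $\operatorname{div}_{\partial K}\big((x-\bar x)^{T}\big)=(n-1)-H\,\langle x-\bar x,\nu\rangle$ over the closed hypersurface $\partial K$. The point is that $H$ enters \cref{eq:mink-id-sketch} \emph{linearly}: writing $H=H^{+}-H^{-}$ with $H^{-}\defeq\max(-H,0)$, the (a priori large and completely uncontrolled) negative part $H^{-}$ ends up on the favourable side of the resulting inequality, so no bound on the curvatures is required. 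In particular one cannot hope to deduce the theorem from \cref{thm:main_statement_nuclear}: although $\norm{\II}_{1}\ge H^{+}$, this inequality points the wrong way.

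First I would normalize. Since both the asserted inequality and the property of being a $C^{1}$ $\eps$-perturbation of a ball are scale invariant, we may assume $\Per(K)=\Per(B_{1})=\Haus^{n-1}(\S^{n-1})$; after a translation we may take $\bar x=0$ in \cref{eq:def_eps_perturbation}, and then, for $\eps$ small, $K$ is star-shaped about the origin, say $\partial K=\{\rho(\theta)\,\theta:\theta\in\S^{n-1}\}$ with $\rho\colon\S^{n-1}\to(0,\infty)$ smooth. Inserting the explicit formula for the outer unit normal of a radial graph into \cref{eq:def_eps_perturbation} gives $\norm{\nabla_{\S^{n-1}}\log\rho}_{L^{\infty}}\le C(n)\eps$, so $\rho$ has oscillation $\le C(n)\eps$ on $\S^{n-1}$; since $\Haus^{n-1}(\S^{n-1})=\Per(K)\ge\int_{\S^{n-1}}\rho^{\,n-1}\ge(\min_{\S^{n-1}}\rho)^{n-1}\,\Haus^{n-1}(\S^{n-1})$ we get $\min_{\S^{n-1}}\rho\le 1$, and combining the two we obtain $\max_{\S^{n-1}}\rho\le 1+C(n)\eps$. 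In particular, on $\partial K$ one has $0<\langle x,\nu(x)\rangle\le\abs{x}\le 1+C(n)\eps$, the lower bound because $\langle x,\nu\rangle\ge\abs{x}\big(1-\tfrac{\eps^{2}}{2}\big)$.

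With this the estimate is essentially one line. As $H^{-}\ge 0$ and $\langle x,\nu\rangle>0$ on $\partial K$, \cref{eq:mink-id-sketch} with $\bar x=0$ gives
\[
    \int_{\partial K}H^{+}\langle x,\nu\rangle\,\de\Haus^{n-1}
    =(n-1)\Per(K)+\int_{\partial K}H^{-}\langle x,\nu\rangle\,\de\Haus^{n-1}
    \ge(n-1)\Per(K)\,,
\]
hence $\int_{\partial K}H^{+}\,\de\Haus^{n-1}\ge\frac{(n-1)\Per(K)}{1+C(n)\eps}$ (dividing by $\sup_{\partial K}\langle x,\nu\rangle$). Raising to the power $\tfrac1{n-2}$, dividing by $\Per(K)^{\frac1{n-1}}$ and recalling $\Per(K)=\Haus^{n-1}(\S^{n-1})=\Per(B_{1})$, this reads
\[
    \frac{\big(\int_{\partial K}H^{+}\,\de\Haus^{n-1}\big)^{\frac1{n-2}}}{\Per(K)^{\frac1{n-1}}}
    \ge\big(1+C(n)\eps\big)^{-\frac1{n-2}}\,\frac{\big((n-1)\Haus^{n-1}(\S^{n-1})\big)^{\frac1{n-2}}}{\Per(B_{1})^{\frac1{n-1}}}\,.
\]
Since the last fraction is a dimensional constant and $\big(1+C(n)\eps\big)^{-\frac1{n-2}}\ge 1-C'(n)\eps$, taking $\eps_{4}=\eps_{4}(n,\delta)$ small enough turns the multiplicative loss into an additive error $\le\delta$, which is exactly the claim.

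The only step that is not an immediate computation is the normalization, namely extracting from the single $L^{\infty}$-bound \cref{eq:def_eps_perturbation} on the Gauss map the estimate $\max\rho\le 1+C(n)\eps$ with the \emph{sharp} leading constant (equivalently, the $C^{0}$-closeness of $\partial K$ to $\partial B_{1}$ with the correct leading term once the perimeter is fixed). I expect this to be the main --- though modest --- obstacle; everything else is the linear Minkowski identity together with the observation that the non-mean-convex part of $\partial K$ only helps.
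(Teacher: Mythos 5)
Your proof is correct, and it takes a genuinely different and substantially more elementary route than the paper. The paper derives \cref{thm:main_statement_almost_sharp} from the whole apparatus built up in \cref{sec:computing_formulas,sec:cubic_term,sec:main_proofs}: it normalizes $\Per(K)=\Per(B_1)$, invokes the expansion \cref{eq:integral_H_formula} to arrive at the analogue of \cref{eq:tmp7531}, applies \cref{lem:freq_decomposition} with $f(s,t)=\frac{(1+s)^{n-3}}{(1+s)^2+t}$ and $g(s,t)=(1+\tfrac{t}{(1+s)^2})^{-1/2}$ (the precursor of \cref{eq:tmp13}) to bound the cubic Hessian term by $\omega(\eps)+\omega(\eps)\int_{\partial K}H^-$, and finally absorbs the $H^-$ term into $\int_{\partial K}H^+=\int_{\partial K}H+\int_{\partial K}H^-$. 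Your argument instead rests entirely on the first Minkowski (Hsiung--Minkowski) formula $\int_{\partial K}H\langle x,\nu\rangle=(n-1)\Per(K)$, which needs no curvature or convexity hypotheses, together with the elementary $C^0$-estimate $\sup_{\partial K}\langle x,\nu\rangle\le 1+C(n)\eps$ coming from the normalization $\Per(K)=\Per(B_1)$ and the $C^1$-closeness. Splitting $H=H^+-H^-$ and using $H^-\langle x,\nu\rangle\ge 0$ gives $\int_{\partial K}H^+\ge (n-1)\Per(K)/(1+C(n)\eps)$, which is the almost-sharp inequality. The two proofs buy different things: yours is short, self-contained, avoids the profile-function expansions and the frequency decomposition entirely, and makes transparent that the multiplicative loss $(1+C\eps)^{-1}$ (hence the additive $\delta$) is exactly the gap between $\sup_{\partial K}\langle x,\nu\rangle$ and $1$ --- consistent with the fact that sharpening to $\delta=0$ would require $\max|x|\le 1$ under $\Per(K)=\Per(B_1)$, which forces $K=B_1$. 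The paper's route, while more complex for this one theorem, is what also yields \cref{thm:main_statement_nuclear,thm:main_statement_axial,thm:main_statement_stability} and the quantitative stability remark, so it is not wasted machinery; but as a direct proof of \cref{thm:main_statement_almost_sharp} alone, your argument is cleaner and should be recorded as a simpler alternative.
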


In \cref{app:counterexample} we construct a domain $K$ which is $C^1$-close to the ball $B_1$ but such that $\int_{\partial K}H<0$. This shows that, already in the perturbative setting, this Minkowski-like inequalities are rather delicate. In particular all our results would be false if instead of $H^+$ we were to use $H$. On the other hand, let us remark that all our results are already known for $C^2$-perturbations of a ball since $C^2$-perturbations of a ball are convex.
On the other hand for $C^1$-perturbations the results are new and much more delicate.

Even though our results are the best available without assuming mean-convexity, we are not able to prove the ideal statement one might hope to show with our methods. Indeed, the ideal statement is the sharp Minkowski inequality for $C^1$-perturbations of a ball. 
There is a major obstruction which prevents us from proving the optimal statement.
We discuss this obstruction and state a related conjecture (which might be of independent interest) in \cref{sec:open_conjecture}.

As will be clear from the proofs, even if we are not stating it, we are able to show also the quantitative sharp stability of \cref{thm:main_statement_nuclear} and \cref{thm:main_statement_axial}.

\subsection{Outline of the proofs}
We describe the main steps of the proof of \cref{thm:main_statement_stability} (which is the simplest one) under the additional constraint $H\ge 0$. The proofs of \cref{thm:main_statement_nuclear,thm:main_statement_axial,thm:main_statement_almost_sharp} go along similar lines.
Let us emphasize that the real proofs require additional ideas (necessary to remove the assumption $H\ge 0$) and are much more technically involved compared to what we are going to describe.
For the sake of clarity, in this outline we will sweep under the carpet many difficulties and we will heavily simplify some steps by considering simpler expressions in place of the ones which shall be considered.

We are going to adopt the notation introduced in \cref{sec:notation}.

Let $K\subseteq\R^n$ be a $C^1$ $\eps$-perturbation of the ball (with $\eps>0$ sufficiently small) which is mean-convex (i.e., $H\ge 0$). Then, up to translation, there is a function $u:\S^{n-1}\to[-1,\infty]$ such that $(1+u(x))x\in\partial K$ for all $x\in\S^{n-1}$ and $\norm{u}_{C^1}\lesssim \eps$ (see \cref{lem:nablau_vs_normal}).

It holds (see \cref{eq:integral_H_formula})
\begin{equation*}\begin{aligned}
    &\int_{\partial K} H - (n-1)\Haus^{n-1}(\S^{n-1})
    =
    (n-1)(n-2)\int_{\S^{n-1}} u + \frac{(n-1)(n-2)(n-3)}{2}\int_{\S^{n-1}} u^2 \\
    &\hspace{155pt}+ (n-2)\int_{\S^{n-1}}\abs{\nabla u}^2
    + \int_{\S^{n-1}} (1+u)^{n-3}\frac{\nabla^2 u[\nabla u,\nabla u]}{(1+u)^2 + \abs{\nabla u}^2}
    \\
    &\hspace{155pt}+\omega(\eps)\int_{\S^{n-1}} u^2 + \abs{\nabla u}^2 \,,
\end{aligned}\end{equation*}
where $\omega(\eps)$ denotes a quantity that goes to $0$ as $\eps\to0$.
If we assume that the barycenter of $K$ is $0_{\R^n}$ and $|K|=|B_1|$, then the methods developed by Fuglede (as described in \cref{subsec:background}) coupled with the previous identity imply
\begin{equation}\label{eq:intro_with_cubicterm}\begin{aligned}
    \int_{\partial K} H - (n-1)\Haus^{n-1}(\S^{n-1})
    &\ge
     (n-2-\omega(\eps))\int_{\S^{n-1}}\abs{\nabla u}^2  - (n-1)(n-2)\int_{\S^{n-1}}u^2\\
     &\hspace{14pt}+ 
     \int_{\S^{n-1}} (1+u)^{n-3}\frac{\nabla^2 u[\nabla u,\nabla u]}{(1+u)^2 + \abs{\nabla u}^2} \,.
\end{aligned}\end{equation}
If we were able to control appropriately the \emph{bad} cubic nonlinear term, we would deduce the desired stability (since $\int\abs{\nabla u}^2$ is comparable to the right-hand side of \cref{eq:main_statement_stability}, see \cref{lem:nablau_vs_normal}) as a consequence of the Poincaré inequality on the sphere (recall that, as in Fuglede's argument, we may assume that $u$ is orthogonal to the first $n$ eigenfunctions of the Laplace operator).
Controlling the cubic nonlinear term is nontrivial and, as one may expect, to control it we shall exploit the mean-convexity $H\ge 0$, which is equivalent to (see \cref{eq:mean_curvature_formula})
\begin{equation*}
    \div\bigg(
    \frac{1+u}{\sqrt{(1+u)^2 + \abs{\nabla u}^2}
    }\, \nabla u \bigg) \le n-1+\omega(\eps) \,.
\end{equation*}
For the sake of clarity let us simplify the situation, we assume that the cubic term we want to control is
\begin{equation*}
    \int_{\S^{n-1}} \nabla^2 u[\nabla u,\nabla u]
\end{equation*}
and the constraint is
\begin{equation*}
    \lapl u \le n-1 \,.
\end{equation*}

Integrating by parts and using the constraint we deduce
\begin{equation*}
    \int_{\S^{n-1}} \nabla^2 u[\nabla u,\nabla u] 
    =
    \frac12 \int_{\S^{n-1}} \nabla u\cdot \nabla(\abs{\nabla u}^2)
    =
    -\frac12 \int_{\S^{n-1}} \lapl u \abs{\nabla u}^2
    \ge
    -\frac{n-1}2 \int_{\S^{n-1}} \abs{\nabla u}^2 \,.
\end{equation*}
If we plug this inequality into \cref{eq:intro_with_cubicterm} we get
\begin{equation*}\begin{aligned}
    \int_{\partial K} H - (n-1)\Haus^{n-1}(\S^{n-1})
    &\ge
     \Big(\frac{n-3}2-\omega(\eps)\Big)\int_{\S^{n-1}}\abs{\nabla u}^2  - (n-1)(n-2)\int_{\S^{n-1}}u^2 \,.
\end{aligned}\end{equation*}
Unfortunately, this latter estimate is not useful as the right-hand side can be negative. Indeed, since we can only assume that $u$ is orthogonal to the first $n$ eigenfunctions of the Laplacian, we have \emph{only} $\int \abs{\nabla u}^2 \ge 2n \int \abs{u}^2$, which does not guarantee that the right-hand side of the last estimate is nonnegative. Thus, in order to prove the stability (or even just the validity of the inequality), a sharper argument is necessary.

To overcome this issue, we decompose $u=u_1+u_2$ where $u_1$ is the low-frequency component of $u$ and $u_2$ is the high-frequency component (or equivalently $u_1$ is the projection of $u$ on the subspace of the eigenfunctions of $-\lapl$ with eigenvalues smaller than $\lambda=\lambda(n)$). Since we can prove that \emph{only $u_2$ matters in the cubic term}, we deduce the following improved estimate for the cubic term
\begin{align*}
    \int_{\S^{n-1}} \nabla^2 u[\nabla u,\nabla u]
    &=
    \int_{\S^{n-1}} \nabla^2 u_2[\nabla u_2,\nabla u_2] + \omega(\eps)\int_{\S^{n-1}}\abs{\nabla u}^2 \\
    &=
    -\int_{\S^{n-1}} \lapl u_2\abs{\nabla u_2}^2 + \omega(\eps)\int_{\S^{n-1}}\abs{\nabla u}^2 \\
    &\ge -\frac{n-1}2  \int_{\S^{n-1}}\abs{\nabla u_2}^2 
    + \omega(\eps)\int_{\S^{n-1}}\abs{\nabla u}^2
    \,.
\end{align*}
Since $u_2$ enjoys a very strong Poincaré inequality (due to the fact that we can choose $\lambda\gg 1$), coupling \cref{eq:intro_with_cubicterm} with the latter estimate one obtains
\begin{align*}
    \int_{\partial K} H - (n-1)\Haus^{n-1}(\S^{n-1})
    \ge
    c(n) \int_{\S^{n-1}} \abs{\nabla u}^2 \,,
\end{align*}
which is equivalent to \cref{eq:main_statement_stability} (if $H\ge 0$).

\subsection{Structure of the paper}
After introducing the notation, in \cref{sec:computing_formulas} we compute various geometric quantities and we obtain formulas that will be useful later on.
The core of our work is in \cref{sec:cubic_term,sec:main_proofs}. In \cref{sec:cubic_term} we develop some techniques to control the \emph{bad cubic term} mentioned in the outline of the proof, while in \cref{sec:main_proofs} we prove the theorems stated in the introduction, namely \cref{thm:main_statement_nuclear,thm:main_statement_axial,thm:main_statement_stability,thm:main_statement_almost_sharp}.
In the last section, we isolate the obstruction that prevents us from proving the \emph{optimal statement} (as explained in the introduction) and we state a conjecture that might be of independent interest.

Finally, in \cref{app:counterexample} we show, constructing a counterexample, that \cref{eq:minkowski,eq:minkowski_vol} are false for perturbations of a ball if we were to replace $H^+$ with $H$.

\subsection{Acknowledgements}
The author is very grateful to A. Figalli for his guidance on the topic of this paper.
The author has received funding from the European Research Council under the Grant Agreement No. 721675 ``Regularity and Stability in Partial Differential Equations (RSPDE)''.

\section{Notation}\label{sec:notation}
Given two expressions $A, B$; the notation $A\lesssim B$ is equivalent to $A\le c(n)B$ where $c(n)>0$ is a constant that depends only on the dimension $n$ of the ambient $\R^n$.
Given an expression $A$, we write $B=\bigo(A)$ if $\abs{B}\lesssim A$.

We denote with $\omega(\eps)$ any expression that goes to $0$ as $\eps\to 0$. Notice that $\omega(\eps)$ may be much bigger than $\eps$. Moreover, $\omega(\eps)$ may be both positive and negative. If there is more than one occurrence of $\omega(\eps)$ in a single expression, they do not necessarily represent the same value (e.g., the expression $\omega(\eps)-\omega(\eps)$ is not necessarily null).

Given a domain $K$, the normal to the boundary is denoted by $\nu$, the second fundamental form of the boundary is denoted by $\II$, and the mean curvature of the boundary $\partial K$ (that is the trace of $\II$) is denoted by $H$.

All the integrals on the sphere $\S^{n-1}$ or on the boundary $\partial K$ are with respect to the $(n-1)$-Hausdorff measure $\Haus^{n-1}$; we will often drop the notation $\de\Haus^{n-1}$. The Lebesgue measure of $K$ is denoted by $\abs{K}$, while its perimeter (i.e., the $(n-1)$-Hausdorff measure of the boundary $\partial K$) is denoted by $\Per(K)$.

Given a real number $t\in\R$, we define $t^+=\max(0, t)$ and $t^-=-\min(0, t)$, so that $t=t^+-t^-$.

\section{Computing some geometric quantities}\label{sec:computing_formulas}
In the next lemma we collect a number of formulas which will be useful later on. Similar computations can be found in \cite[Section 2]{Urbas1990}.

\begin{lemma}\label{lem:formulas}
    Let $K\subseteq\R^n$ be a star-shaped domain with smooth boundary. Let $u:\S^{n-1}\to(-1,\infty)$ be the function such that $(1+u(x))x\in\partial K$ for all $x\in\S^{n-1}$.
    Let us define $T(x)\defeq (1+u(x))x$, so that $\partial K = T(\S^{n-1})$.
    Given $y\in\partial K$, let $\II_y$ be the second fundamental form of $\partial K$ at $y$ and let $H_y$ be the mean curvature of $\partial K$ at $y$. For the computations, it is useful to define $v(x)\defeq \frac{\nabla u(x)}{1+u(x)}$ for all $x\in\S^{n-1}$.
    
    We have the following formulas:\footnote{In the formula for $\II$, the operator $\nabla^2 u$ is extended as $0$ on the span of the vector $x$ (i.e., $\nabla^2 u[x, X] = \nabla^2 u[X, x]=0$ for any vector $X$).}
    \begin{align}
        &\abs{K} = \int_{\S^{n-1}} \frac{(1+u)^n}{n} \,, \label{eq:volume_formula}\\
        &\Per(K) = \int_{\S^{n-1}} (1+u)^{n-1}\sqrt{1+\abs{v}^2} \,, \label{eq:perimeter_formula}\\
        &\II_{T(x)} = \frac{1}{(1+u)\sqrt{1+\abs{v}^2}}\Big(\id - \frac{\nabla^2 u}{1+u} + v\otimes v\Big) \,, \label{eq:second_fund_formula}\\
        &H_{T(x)} = \frac{1}{(1+u)\sqrt{1+\abs{v}^2}}\bigg(n-1-\frac{\lapl u}{1+u} + \frac{\abs{v}^2}{1+\abs{v}^2} + \frac{\nabla^2 u[\nabla u,\nabla u]}{(1+u)^3(1+\abs{v}^2)}\bigg) \label{eq:mean_curvature_formula} \\
        &\hspace{27pt}=
        \frac{n-1 + \abs{v}^2}{(1+u)\sqrt{1+\abs{v}^2}} - (1+u)^{-2}\div\bigg(\frac{\nabla u}{\sqrt{1+\abs{v}^2}}\bigg)
        \,.\label{eq:mean_curvature_divergence}
    \end{align}
    If we assume furthermore that $\norm{u}_{C^1} < \eps$, then we have
    \begin{equation}\label{eq:integral_H_formula}\begin{aligned}
        &\int_{\partial K} H - (n-1)\Haus^{n-1}(\S^{n-1})
        =
        (n-1)(n-2)\int_{\S^{n-1}} u + \frac{(n-1)(n-2)(n-3)}{2}\int_{\S^{n-1}} u^2 \\
        &\hspace{142pt}+ (n-2)\int_{\S^{n-1}}\abs{\nabla u}^2
        + \int_{\S^{n-1}} (1+u)^{n-3}\frac{\nabla^2 u[\nabla u,\nabla u]}{(1+u)^2 + \abs{\nabla u}^2}
        \\
        &\hspace{142pt}+\omega(\eps)\int_{\S^{n-1}} u^2 + \abs{\nabla u}^2 \,,
    \end{aligned}\end{equation}
    where $\omega(\eps)$ denotes a quantity that goes to $0$ as $\eps\to0$.
\end{lemma}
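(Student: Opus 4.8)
The plan is to derive each formula by a direct computation starting from the parametrization $T(x)=(1+u(x))x$ of $\partial K$ over $\mathbb S^{n-1}$, and then to Taylor-expand the resulting expression for $\int_{\partial K} H$ in the regime $\|u\|_{C^1}<\varepsilon\ll 1$.

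First I would set up a local orthonormal frame $\{e_1,\dots,e_{n-1}\}$ on $\mathbb S^{n-1}$ around a point $x$, with $x$ itself playing the role of the outward unit normal to the sphere. Differentiating $T$ gives $dT(e_i)=(1+u)e_i+(\partial_i u)\,x=(1+u)(e_i+v_i x)$ where $v_i=\partial_i u/(1+u)$, so the induced metric is $g_{ij}=(1+u)^2(\delta_{ij}+v_iv_j)$; taking the determinant via the matrix determinant lemma ($\det(\mathrm{Id}+v\otimes v)=1+|v|^2$) yields $\sqrt{\det g}=(1+u)^{n-1}\sqrt{1+|v|^2}$, which after integration gives both the volume formula \eqref{eq:volume_formula} (alternatively via the divergence theorem / cone decomposition, $|K|=\frac1n\int_{\partial K} x\cdot\nu$) and the perimeter formula \eqref{eq:perimeter_formula}. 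The outward normal is $\nu=\frac{x-v}{\sqrt{1+|v|^2}}$ (one checks $\nu\cdot dT(e_i)=0$ and $|\nu|=1$), and then $\II_{ij}=-\langle \partial_j(\nu\circ T),dT(e_i)\rangle$ (equivalently $\langle \nu, \nabla^2_{e_i e_j}T\rangle$), which after expanding $\nabla^2 T$ in terms of $\nabla^2 u$, $\nabla u$ and the second fundamental form of the sphere (namely $\mathrm{Id}$), and then raising an index with $g^{ij}=(1+u)^{-2}(\delta_{ij}-\frac{v_iv_j}{1+|v|^2})$, produces \eqref{eq:second_fund_formula}. Tracing \eqref{eq:second_fund_formula} against $g^{ij}$ and simplifying the contraction $g^{ij}(\nabla^2u)_{ij}$ and $g^{ij}v_iv_j$ gives \eqref{eq:mean_curvature_formula}; the divergence form \eqref{eq:mean_curvature_divergence} follows by recognizing $\mathrm{div}\big(\nabla u/\sqrt{1+|v|^2}\big)=\frac{\lapl u}{\sqrt{1+|v|^2}}-\frac{\nabla^2u[\nabla u,\nabla u]}{(1+u)^2(1+|v|^2)^{3/2}}$ and matching terms (this is really just the first-variation-of-area identity $H\nu=\Delta_{\partial K}(\mathrm{id})$ written in coordinates, so one could alternatively argue that way).

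For the expansion \eqref{eq:integral_H_formula}, the key point is that $\int_{\partial K} H\,d\mathcal H^{n-1}=\int_{\mathbb S^{n-1}} H_{T(x)}\,(1+u)^{n-1}\sqrt{1+|v|^2}\,dx$, so the two troublesome square-root factors from the area element partly cancel against the $\frac{1}{(1+u)\sqrt{1+|v|^2}}$ in front of $H$. Using \eqref{eq:mean_curvature_divergence}, the integrand becomes $(1+u)^{n-2}(n-1+|v|^2)-(1+u)^{n-3}\sqrt{1+|v|^2}\,\mathrm{div}\big(\nabla u/\sqrt{1+|v|^2}\big)$. I would integrate the divergence term by parts against $(1+u)^{n-3}\sqrt{1+|v|^2}$, whose gradient is $(n-3)(1+u)^{n-4}\nabla u\sqrt{1+|v|^2}+\bigo(\varepsilon)\nabla(\cdot)$ — this is exactly the step that trades the divergence for the cubic term $\int (1+u)^{n-3}\frac{\nabla^2u[\nabla u,\nabla u]}{(1+u)^2+|\nabla u|^2}$ plus a $\int (n-3)(1+u)^{n-4}|\nabla u|^2\cdot\frac{\text{stuff}}{\text{stuff}}$ piece — and then expand all the remaining smooth factors $(1+u)^{n-2}$, $(1+u)^{n-4}$, $|v|^2=|\nabla u|^2/(1+u)^2$, and $\sqrt{1+|v|^2}=1+\bigo(\varepsilon^2)$ to second order in $u$, collecting anything of size $\smallo(u^2+|\nabla u|^2)$ into the $\omega(\varepsilon)\int(u^2+|\nabla u|^2)$ remainder. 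Subtracting the constant $(n-1)\mathcal H^{n-1}(\mathbb S^{n-1})=\int_{\mathbb S^{n-1}}(n-1)$ then leaves precisely the linear term $(n-1)(n-2)\int u$, the quadratic terms $\frac{(n-1)(n-2)(n-3)}{2}\int u^2$ and $(n-2)\int|\nabla u|^2$, and the cubic term, as claimed.

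The main obstacle is bookkeeping rather than conceptual: one must be careful that the cubic term is kept \emph{exactly} (not expanded), since $\nabla^2 u$ is not controlled by $\varepsilon$, while every other nonlinearity \emph{is} safely absorbed into $\omega(\varepsilon)(u^2+|\nabla u|^2)$ because it carries an extra power of $u$ or $\nabla u$ beyond quadratic order and those extra factors are $\bigo(\varepsilon)$ in $L^\infty$. Concretely, the delicate line is the integration by parts of the divergence term: one needs $\nabla\big((1+u)^{n-3}\sqrt{1+|v|^2}\big)=(n-3)(1+u)^{n-4}\sqrt{1+|v|^2}\,\nabla u+(1+u)^{n-3}\nabla\sqrt{1+|v|^2}$, note that the second summand contributes $\int \frac{\nabla u}{\sqrt{1+|v|^2}}\cdot(1+u)^{n-3}\nabla\sqrt{1+|v|^2}$ which is quartic in $(u,\nabla u)$ (two gradients from $\nabla u\cdot\nabla|v|^2$, plus the rest bounded), hence $\bigo(\varepsilon^2)\int|\nabla u|^2=\omega(\varepsilon)\int|\nabla u|^2$, and that in the first summand $\frac{\nabla u\cdot\nabla u}{\sqrt{1+|v|^2}}=|\nabla u|^2(1+\bigo(\varepsilon^2))$ and $(1+u)^{n-4}=1+\bigo(\varepsilon)$, so that piece equals $(n-3)\int|\nabla u|^2+\omega(\varepsilon)\int|\nabla u|^2$. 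Adding this to the $(1+u)^{n-2}|v|^2$ contribution $\int|\nabla u|^2+\omega(\varepsilon)\int|\nabla u|^2$ from the first group of terms, one gets the coefficient $(n-2)$ in front of $\int|\nabla u|^2$; the rest is a routine second-order Taylor expansion of $(1+u)^m=1+mu+\binom m2 u^2+\smallo(u^2)$.
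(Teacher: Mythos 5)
Your derivations of the volume, perimeter, normal, second fundamental form, and mean curvature formulas match the paper's computations. For the integral expansion \eqref{eq:integral_H_formula} you take a slightly different (but legitimate) route: you substitute the divergence form \eqref{eq:mean_curvature_divergence} into $\int_{\partial K} H$ and integrate the entire $\div$-term by parts, whereas the paper starts from \eqref{eq:mean_curvature_formula} --- so the cubic term is already present before any integration by parts --- and only integrates by parts the $-(1+u)^{n-3}\lapl u$ piece. Both routes work; the paper's is a bit safer precisely because the dangerous cubic term is never hidden inside a divergence.

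There is, however, a genuine error in your detailed verification of the ``delicate line.'' You claim that the second summand of the gradient, $\int\frac{\nabla u}{\sqrt{1+|v|^2}}\cdot(1+u)^{n-3}\nabla\sqrt{1+|v|^2}$, is ``quartic in $(u,\nabla u)$ \ldots hence $\bigo(\eps^2)\int|\nabla u|^2$.'' This is false, because $\nabla|v|^2$ carries a Hessian: since $|v|^2=|\nabla u|^2/(1+u)^2$ one has
\begin{equation*}
    \frac{\nabla u}{\sqrt{1+|v|^2}}\cdot(1+u)^{n-3}\nabla\sqrt{1+|v|^2}
    =(1+u)^{n-3}\frac{\nabla^2 u[\nabla u,\nabla u]}{(1+u)^2+|\nabla u|^2}
    -(1+u)^{n-6}\frac{|\nabla u|^4}{1+|v|^2}\,,
\end{equation*}
and only the second piece is a negligible $\bigo(\eps^2)\int|\nabla u|^2$; the first piece is exactly the uncontrollable cubic term of \eqref{eq:integral_H_formula}. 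Your own preceding sentence correctly says that this integration by parts ``trades the divergence for the cubic term,'' but the later paragraph contradicts it: if the $\nabla\sqrt{1+|v|^2}$ contribution were really $\bigo(\eps^2)\int|\nabla u|^2$, the cubic term would never appear in the final formula, which is wrong (and would trivialize the rest of the paper). So the approach is sound, but the verification must be corrected: the cubic term arises precisely because $\nabla\sqrt{1+|v|^2}$ contains $\nabla^2u\cdot\nabla u$, and $\nabla^2 u$ is not $\bigo(\eps)$.
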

\begin{proof}
    The expression \cref{eq:volume_formula} is standard (it follows by Fubini's theorem and the change of variable formula).
    
    The differential of $T$ is
    \begin{equation*}
        \de T_x = (1+u)(\id + x\otimes v)\,,
    \end{equation*}
    and its Jacobian satisfies
    \begin{equation}\label{eq:jacobian}
        J(T_x) = (1+u)^{n-1}\sqrt{1+\abs{v}^2} \,.
    \end{equation}
    The expression \cref{eq:perimeter_formula} is a direct consequence of \cref{eq:jacobian}.
    
    Let $\nu:\partial K\to\R^n$ be the outer normal to $\partial K$.
    It holds
    \begin{equation}\label{eq:normal_formula}
        \nu(T(x)) = \frac{x-v}{\sqrt{1+\abs{v}^2}} \,.
    \end{equation}
    
    Let $X, Y\in\chi(\S^{n-1})$ be two vector fields on the sphere. 
    Given another vector field $Z:\partial K\to\R^n$ (not necessarily tangent to $\partial K$), it holds the identity
    \begin{equation*}
        \nabla^{\R^n}_{\de T[X]} Z = \nabla^{\R^n}_X (Z\circ T)
    \end{equation*}
    and choosing $Z=\nu$ we obtain
    \begin{equation}\label{eq:loc_id5543}
        \nabla^{\R^n}_{\de T[X]} \nu 
        = 
        \nabla^{\R^n}_X\bigg(\frac{x-v}{\sqrt{1+\abs{v}^2}}\bigg) \,.
    \end{equation}
    Applying \cref{eq:loc_id5543}, we have
    \begin{align*}
        \II_{T(x)}(\de T_x[X], \de T_x[Y])
        &=
        \scalprod{\nabla^{\R^{n}}_{\de T_x[X]} \nu, \de T_x[Y]}
        =
        \frac1{\sqrt{1+\abs{v}^2}}
        \scalprod{\nabla^{\R^n}_X(x-v), \de T_x[Y]} \\
        &=
        \frac{1+u}{\sqrt{1+\abs{v}^2}}
        \scalprod{X-\frac{\nabla^{\R^n}_X \nabla u}{1+u} + \scalprod{v, X}v, Y+\scalprod{v, Y}x} \\
        &=
        \frac{1+u}{\sqrt{1+\abs{v}^2}}
        \left(\scalprod{X, Y} - \frac{\nabla^2 u[X, Y]}{1+u} + 2\scalprod{v, X}\scalprod{v, Y}\right) \,.
    \end{align*}
    Thanks to the previous formula, it is not hard to check that \cref{eq:second_fund_formula} holds.

    Since the mean curvature is the trace of the second fundamental form, we deduce
    \begin{equation}\label{eq:mean_curvature}
    \begin{aligned}
        H_{T(x)} 
        &= \tr^{\R^n}(\II_{T(x)}) - \II_{T(x)}(\nu, \nu) \\
        &=
        \frac1{(1+u)\sqrt{1+\abs{v}^2}}
        \left(
        n-\frac{\lapl u}{1+u} + \abs{v}^2 - 1 + \frac{\nabla^2 u[\nabla u, \nabla u]}{(1+u)^3(1+\abs{v}^2)}-\frac{\abs{v}^4}{1+\abs{v}^2}
        \right) \\
        &= \frac1{(1+u)\sqrt{1+\abs{v}^2}}
        \left(
        n-1-\frac{\lapl u}{1+u} + \frac{\abs{v}^2}{1+\abs{v}^2} + \frac{\nabla^2 u[\nabla u, \nabla u]}{(1+u)^3(1+\abs{v}^2)}
        \right) \,,    
    \end{aligned}
    \end{equation}
    which coincides with \cref{eq:mean_curvature_formula}. The formula \cref{eq:mean_curvature_divergence} follows from \cref{eq:mean_curvature_formula} thanks to the identity
    \begin{equation*}
        \div\bigg(\frac{\nabla u}{\sqrt{1+\abs{v}^2}}\bigg)
        =
        \frac{1}{\sqrt{1+\abs{v}^2}}\Big(
        \lapl u - \frac{\nabla^2 u[\nabla u,\nabla u]}{(1+u)^2 + \abs{\nabla u}^2}
        + \frac{(1+u)\abs{v}^4}{1+\abs{v}^2}\Big)
    \end{equation*}

    Joining \cref{eq:jacobian,eq:mean_curvature}, we get
    \begin{align*}
        \int_{\partial K} H =
        \int_{\S^{n-1}} (1+u)^{n-2} 
        \left(
        n-1-\frac{\lapl u}{1+u} + \frac{\abs{v}^2}{1+\abs{v}^2} + \frac{\nabla^2 u[\nabla u, \nabla u]}{(1+u)^3(1+\abs{v}^2)}
        \right) \,.
    \end{align*}
    Integrating by parts and exploiting $\norm{u}_{C^1}<\eps$, we can continue the chain of equalities with
    \begin{align*}
        \hspace{51pt}=&
        \int_{\S^{n-1}}(n-1)\big(1+(n-2)u + \frac{(n-2)(n-3)}{2}u^2\big) 
        + (n-3)\abs{\nabla u}^2 + \abs{\nabla u}^2 \\
        &\quad\ + (1+u)^{n-3}\frac{\nabla^2 u[\nabla u, \nabla u]}{(1+u)^2 + \abs{\nabla u}^2} + \omega(\eps)(u^2 + \abs{\nabla u}^2)\,,
    \end{align*}
    which is equivalent to \cref{eq:integral_H_formula}.
\end{proof}

In the following lemma we show that the deviation of the normal $\nu(y)$ to $\partial K$ from $\frac{y}{\abs{y}}$ is comparable to $\abs{\nabla u(x)}$ where $u$ is the \emph{profile function} (i.e. $(1+u(x))x\in\partial K$ for all $x\in\S^{n-1}$) of the star-shaped domain $K$.
\begin{lemma}\label{lem:nablau_vs_normal}
    There is a universal constant $\eps_0>0$ such that the following statement holds.
    
    Let $K\subseteq\R^n$ be a star-shaped domain, let $u:\S^{n-1}\to(-1,\infty)$ be the function such that $T(x)\defeq (1+u(x))x\in\partial K$ for any $x\in\S^{n-1}$. Let $\nu:\partial K\to\S^{n-1}$ be the normal to $\partial K$.
    \begin{enumerate}[label=(\arabic*)]
        \item Given $y=T(x)$, if $\abs{\nu(y)-\frac y{\abs{y}}} < \eps_0$, then \label{it:good_normal_implies_good_gradient}
        \begin{equation*}
            \frac{\abs{\nabla u(x)}}{1+u(x)} \lesssim \abs*{\nu(y)-\frac y{\abs{y}}} \,.
        \end{equation*}
        \item Given $y=T(x)$, it holds\label{it:good_gradient_implies_good_normal}
        \begin{equation*}
            \abs*{\nu(y)-\frac y{\abs{y}}} \lesssim \frac{\abs{\nabla u(x)}}{1+u(x)} \,.
        \end{equation*}
        \item If $\frac{\abs{\nabla u(x)}}{1+u(x)}<\eps_0$ for all $x\in\S^{n-1}$, then\label{it:good_gradient_implies_good_function}
        \begin{equation*}
            \frac{\max 1+u}{\min 1+u} -1 \lesssim \eps_0 \,.
        \end{equation*}
        \item If $\abs{\nu(y)-\frac y{\abs{y}}} < \eps_0$ for all $y\in\partial K$, then\label{it:int_nabla_eq_int_norm}
        \begin{equation*}
            \int_{\S^{n-1}}\frac{\abs{\nabla u}^2}{(1+u)^2}
            \lesssim
            \fint_{\partial K} \abs*{\nu(y)-\frac{y}{\abs{y}}}^2
            \lesssim
            \int_{\S^{n-1}}\frac{\abs{\nabla u}^2}{(1+u)^2} \,.
        \end{equation*}
    \end{enumerate}
\end{lemma}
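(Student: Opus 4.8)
The whole lemma reduces to a single pointwise computation, so the plan is to carry that out first and then read off the four statements. Writing $y = T(x) = (1+u(x))x$ we have $y/|y| = x$, and \cref{eq:normal_formula} expresses the normal as $\nu(T(x)) = (x-v)/\sqrt{1+|v|^2}$ in terms of $v = \nabla u/(1+u)$. Since $\nabla u$ is tangent to $\S^{n-1}$, the vectors $x$ and $v$ are orthogonal, and expanding the square gives the closed-form identity
\[
    \Bigl|\,\nu(T(x)) - \tfrac{x}{|x|}\,\Bigr|^2
    = 2\Bigl(1 - \tfrac{1}{\sqrt{1+|v(x)|^2}}\Bigr)
    = \frac{2\,|v(x)|^2}{\sqrt{1+|v(x)|^2}\,\bigl(\sqrt{1+|v(x)|^2}+1\bigr)}\,.
\]
From the right-hand expression the denominator is always $\ge 2$, so $|\nu(T(x)) - x|^2 \le |v(x)|^2$ for every $x$; this already gives the second claim, with constant $1$ and no smallness hypothesis. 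For the first claim, if $|\nu(T(x)) - x| < \eps_0$ the middle expression forces $\sqrt{1+|v(x)|^2} \le (1-\eps_0^2/2)^{-1}$, hence $|v(x)|$ is bounded by a universal constant once $\eps_0$ is chosen small; but on any bounded range of $|v|$ the denominator of the right-hand expression is $\lesssim 1$, so $|v(x)|^2 \lesssim |\nu(T(x)) - x|^2$, which is $\frac{|\nabla u(x)|^2}{(1+u(x))^2} \lesssim |\nu(T(x)) - x|^2$, as desired.

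For the third claim I would use that $v = \nabla u/(1+u) = \nabla\log(1+u)$, so the hypothesis reads $|\nabla\log(1+u)| < \eps_0$ everywhere on $\S^{n-1}$. Integrating along a minimizing geodesic (of length at most $\pi = \operatorname{diam}\S^{n-1}$) bounds the oscillation of $\log(1+u)$ by $\pi\eps_0$, whence $\frac{\max(1+u)}{\min(1+u)} = \exp\bigl(\operatorname{osc}\log(1+u)\bigr) \le e^{\pi\eps_0}$, and $e^{\pi\eps_0}-1 \lesssim \eps_0$ for $\eps_0$ small.

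The fourth claim follows by the area formula: by \cref{eq:jacobian},
\[
    \int_{\partial K}\Bigl|\,\nu(y) - \tfrac{y}{|y|}\,\Bigr|^2\de\Haus^{n-1}
    = \int_{\S^{n-1}}\bigl|\nu(T(x))-x\bigr|^2\,(1+u)^{n-1}\sqrt{1+|v|^2}\,\de\Haus^{n-1},
\]
while $\Per(K) = \int_{\S^{n-1}}(1+u)^{n-1}\sqrt{1+|v|^2}\,\de\Haus^{n-1}$. Under the standing hypothesis $|\nu - y/|y|| < \eps_0$, the first claim gives $|v| \lesssim \eps_0$ everywhere, so $\sqrt{1+|v|^2}$ is within $\eps_0^2$ of $1$, and by the pointwise identity $|\nu(T(x))-x|^2$ is two-sidedly comparable to $|v(x)|^2$ with dimensional constants. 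It then remains to handle the weights $(1+u)^{n-1}$: I do not control the size of $1+u$, only — via the third claim — its oscillation, so I would compare the factor $(1+u)^{n-1}$ appearing in the area element with the one appearing in $\Per(K)$, obtaining $(1+u)^{n-1}/\Per(K) \approx \Haus^{n-1}(\S^{n-1})^{-1}$ up to a factor $(1+C\eps_0)^{\pm(n-1)}$. Combining the three comparisons yields $\fint_{\partial K}|\nu - y/|y||^2 \approx \fint_{\S^{n-1}}|v|^2 = \fint_{\S^{n-1}}|\nabla u|^2/(1+u)^2$, which is the asserted double inequality (the passage from $\fint$ to $\int$ on the sphere costing only the dimensional constant $\Haus^{n-1}(\S^{n-1})$). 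This normalization step — not being able to assume $1+u \approx 1$ — is the only point that is not completely mechanical; everything else is the single identity above together with elementary one-variable estimates.
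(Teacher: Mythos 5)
Your proof is correct and follows essentially the same route as the paper: an explicit pointwise formula for $\bigl|\nu(T(x))-x\bigr|^2$ in terms of $|v|=|\nabla u|/(1+u)$ yields parts (1)--(2), an oscillation bound yields (3), and the Jacobian of $T$ together with (3) yields (4). Your closed form $2\bigl(1-(1+|v|^2)^{-1/2}\bigr)$ is algebraically identical to the paper's expression, and your observation that $v=\nabla\log(1+u)$ for part (3) is a slightly cleaner version of the paper's argument with $\nabla\bigl(u/\max(1+u)\bigr)$.
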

\begin{proof}
    The statements \cref{it:good_normal_implies_good_gradient,it:good_gradient_implies_good_normal} follow from the formula (recall \cref{eq:normal_formula})
    \begin{equation*}
        \abs{\nu(T(x))-x}^2 = \frac{\abs{v}^2}{1+\abs{v}^2} + \frac{\abs{v}^4}{(1+\abs{v}^2)(1+\sqrt{1+\abs{v}^2})^2} \,,
    \end{equation*}
    where $v\defeq \frac{\nabla u}{1+u}$.
    
    To prove \cref{it:good_gradient_implies_good_function}, let us argue as follows. Let $R=\max(1+u)$. By assumption, we have $\abs{\nabla (\frac uR)}<\eps_0$ and therefore (since the sphere $\S^{n-1}$ is bounded) we deduce $\max \frac uR - \min \frac uR \lesssim \eps_0$. Since $\max u = R-1$, the last estimate is equivalent to the desired statement.
    
    Finally, let us prove \cref{it:int_nabla_eq_int_norm}. In the following lines we use the notation $A\approx B$ as a shorthand for $A\lesssim B$ and $B\lesssim A$.
    Up to suitably reducing the value of $\eps_0$, thanks to \cref{it:good_normal_implies_good_gradient,it:good_gradient_implies_good_normal,it:good_gradient_implies_good_function}, we may assume that $\abs{\frac{1+u}R-1}\ll 1$ and
    \begin{equation*}
        \abs*{\nu(y)-\frac y{\abs{y}}} \approx \frac{\abs{\nabla u(x)}}{1+u(x)} \,.
    \end{equation*}
    Let $R_0>0$ be the radius such that $\Haus^{n-1}(\S^{n-1})R_0^{n-1} = \Per(K)$. As a consequence of $\abs{\frac{1+u}R-1}\ll 1$, we have $\abs{\frac{1+u}{R_0}-1}\ll 1$.
    Using \cref{eq:jacobian}, we obtain (recall that $x=\frac{T(x)}{\abs{T(x)}}$)
    \begin{align*}
        \fint_{\partial K} \abs*{\nu(y)-\frac{y}{\abs{y}}}^2
        &=
        \frac1{\Haus^{n-1}(\S^{n-1})}
        \int_{\S^{n-1}} 
        \abs*{\nu(T(x))-x}^2
        \Big(\frac{1+u}{R_0}\Big)^{n-1}
        \sqrt{1+\abs*{\frac{\nabla u}{1+u}}^2}
        \\
        &\approx
        \int_{\S^{n-1}} \frac{\abs{\nabla u}^2}{(1+u)^2} \,.
    \end{align*}
\end{proof}

\section{Controlling the cubic term}\label{sec:cubic_term}
In this section we develop some techniques to control the cubic term of \cref{eq:integral_H_formula} (as explained in the outline of the proof in the introduction, this is a crucial step of the proof). Notice that such term is expected to be small (as it is cubic and everything else is quadratic) but, on the other hand, it contains the full Hessian (over which we do not have much control).

The functions $f,g$ and the parameter $\lambda$ that appear in the following statements are assumed to be fixed, hence any constant that depends on them can be absorbed in the notation $\omega(\eps)$.

\begin{lemma}\label{lem:freq_decomposition}
    Let $0\in U\subseteq\R$ be an open set and let $f,g:U\times U\to(0,\infty)$ be two positive $C^1$ functions such that $f(0,0)=g(0,0)=1$. Assume that $B(0, \bar r)\subseteq U$ for a certain $\bar r>0$.
    
    Let $u:\S^{n-1}\to(-1,\infty)$ be a function such that $\norm{u}_{C^1}\le\eps$, with $\eps < \bar r$.
    Decompose the function $u$ as $u=u_1+u_2$, where $u_1, u_2$ belong to the subspaces generated by the eigenfunctions of $-\lapl$ with eigenvalues respectively smaller and larger than a fixed $\lambda > 0$.
    
    It holds
    \begin{align*}
        \int_{\S^{n-1}}f(u,\abs{\nabla u}^2)\nabla^2 u[\nabla u,\nabla u]
        \ge
        &-\Big(\frac12+\omega(\eps)\Big)\int_{\S^{n-1}}\div(g(u,\abs{\nabla u}^2)\nabla u)\abs{\nabla u_2}^2 \\
        &+ \omega(\eps)\int_{\S^{n-1}}([\div(g(u,\abs{\nabla u}^2)\nabla u)]^+ + 1)\abs{\nabla u}^2\,.
    \end{align*}
\end{lemma}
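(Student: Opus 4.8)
\emph{Proof sketch.} The plan is to reduce the statement to the pointwise identity $\nabla^2 u[\nabla u,\nabla u]=\tfrac12\scalprod{\nabla u,\nabla(\abs{\nabla u}^2)}$ (valid on any Riemannian manifold, since $\nabla(\abs{\nabla u}^2)=2\nabla^2 u[\nabla u,\cdot]$) and then integrate by parts, being careful never to leave an un-integrated second derivative of $u$ lying around. Before starting I would record three auxiliary facts. Since $u_1$ belongs to the \emph{fixed} finite-dimensional space spanned by the eigenfunctions of $-\lapl$ of eigenvalue $<\lambda$, all norms there are equivalent and $\norm{u_1}_{C^k}\le C\eps$ with $C=C(\lambda,k,n)$ (because $\norm{u_1}_{L^2}\le\norm{u}_{L^2}\lesssim\eps$); hence $\abs{u_1},\abs{\nabla u_1},\abs{\nabla^2 u_1},\abs{\nabla\lapl u_1},\dots\le\omega(\eps)$ pointwise. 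Since $u_2$ is made of eigenfunctions of eigenvalue $\ge\lambda$, it satisfies the strong Poincaré inequality $\int_{\S^{n-1}}u_2^2\le\tfrac1\lambda\int_{\S^{n-1}}\abs{\nabla u_2}^2$. Finally $\nabla u_1$ and $\nabla u_2$ are $L^2$-orthogonal, so $\int_{\S^{n-1}}\abs{\nabla u_1}^2+\int_{\S^{n-1}}\abs{\nabla u_2}^2=\int_{\S^{n-1}}\abs{\nabla u}^2$. The conjunction of these facts (together with $\abs{\nabla u}\le\eps$) yields the principle I will use silently: any integrand pointwise bounded by $\omega(\eps)$ times a product of at least two factors taken from $\{\abs{u_2},\abs{\nabla u_1},\abs{\nabla u_2},\abs{\nabla u}\}$ integrates to at most $\omega(\eps)\int_{\S^{n-1}}\abs{\nabla u}^2$; I call such a term \emph{negligible}.

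Next I would eliminate the coefficient $f$. It cannot be frozen to $f(0,0)=1$ because it multiplies the uncontrolled $\nabla^2 u$; instead set $\Phi(s,t)\defeq\int_0^t\frac{f(s,\tau)}{g(s,\tau)}\,\de\tau$, which is $C^1$, vanishes at $(0,0)$, has $\partial_t\Phi(0,0)=1$, $\partial_s\Phi(0,0)=0$, and satisfies $f(u,\abs{\nabla u}^2)\,\nabla(\abs{\nabla u}^2)=g(u,\abs{\nabla u}^2)\,\nabla\big(\Phi(u,\abs{\nabla u}^2)\big)-g(u,\abs{\nabla u}^2)\,(\partial_s\Phi)(u,\abs{\nabla u}^2)\,\nabla u$. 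Inserting this into the identity above and integrating by parts gives $\int_{\S^{n-1}}f(u,\abs{\nabla u}^2)\,\nabla^2 u[\nabla u,\nabla u]=-\tfrac12\int_{\S^{n-1}}\div(g\nabla u)\,\Phi(u,\abs{\nabla u}^2)$ plus a negligible term (here $\partial_s\Phi(0,0)=0$ enters). Since $\abs{\Phi(u,\abs{\nabla u}^2)-\abs{\nabla u}^2}\le\omega(\eps)\abs{\nabla u}^2$, this equals $-\tfrac12\int_{\S^{n-1}}\div(g\nabla u)\abs{\nabla u}^2$ up to a quantity of size $\omega(\eps)\int_{\S^{n-1}}\abs{\div(g\nabla u)}\abs{\nabla u}^2$; writing $\abs{\div(g\nabla u)}=[\div(g\nabla u)]^++[\div(g\nabla u)]^-$ and $[\div(g\nabla u)]^-=[\div(g\nabla u)]^+-\div(g\nabla u)$, the first half feeds the error term $\omega(\eps)\int([\div(g\nabla u)]^++1)\abs{\nabla u}^2$ and the second half produces a multiple of $\int\div(g\nabla u)\abs{\nabla u}^2$ that will be absorbed into the $\omega(\eps)$ correction of the coefficient $\tfrac12$.

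It remains to pass from $\abs{\nabla u}^2$ to $\abs{\nabla u_2}^2$, i.e.\ to show that ``only $u_2$ matters''. Using $\abs{\nabla u}^2=\abs{\nabla u_2}^2+2\scalprod{\nabla u_1,\nabla u_2}+\abs{\nabla u_1}^2$ and $\div(g\nabla u)=\div(g\nabla u_1)+\div(g\nabla u_2)$, I would show that every contribution involving $u_1$ is harmless: those containing $\div(g\nabla u_1)=g\lapl u_1+\scalprod{\nabla g,\nabla u_1}$ are negligible after one more integration by parts landing a derivative on $u_1$ (the only worrisome part of $\nabla g$ is $2(\partial_t g)\,\nabla^2 u[\nabla u,\cdot]$, whose $u_1$-component is controlled and whose $u_2$-component is what forces the extra integration by parts), and the terms of shape $\int(\text{coefficient})\,\nabla^2 u_2[\nabla u_1,\cdot]$ are integrated by parts to move the surviving derivative onto $u_1$ as well. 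The only genuinely second-order-in-$u_2$ survivors are then $-\tfrac12\int_{\S^{n-1}}\div(g\nabla u)\abs{\nabla u_2}^2$, together with terms in which $\lapl u_2$ appears against a quadratic expression in $\nabla u_2$; rewriting $g\lapl u_2=\div(g\nabla u)-\scalprod{\nabla g,\nabla u}-g\lapl u_1$ one sees that the piece proportional to $\div(g\nabla u)$ is absorbed into the coefficient $\tfrac12+\omega(\eps)$, its positive part gives exactly the $[\div(g\nabla u)]^+$ error, and the cubic remainder $\scalprod{\nabla g,\nabla u}$ is negligible after yet one more integration by parts.

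I expect the delicate point to be the termination of this procedure. Each time a term $\int(\text{coefficient})\,\nabla^2 u_2[V_1,V_2]$ with $V_1,V_2$ built from $\nabla u_1$ and $\nabla u_2$ is integrated by parts, the derivative of the coefficient reintroduces a factor $\nabla^2 u[\nabla u,\cdot]$ and hence, after splitting $u=u_1+u_2$, a new term of the same type — but carrying an extra factor of $\omega(\eps)$ (from a factor $\abs{\nabla u_1}$, or from $\partial_s\Phi$, $\partial_t g$ evaluated near the origin, or from $\abs{\nabla u_2}^2\le\omega(\eps)^2$). Since $\partial K$ is smooth these integrals are finite, so one may iterate (or solve the resulting fixed-point relation): the geometric factor $\omega(\eps)$ kills the tail, while the accumulated ``good'' parts stay controlled by $\int\abs{\nabla u}^2$, $\int\abs{\div(g\nabla u)}\abs{\nabla u_2}^2$ and $\int f\nabla^2 u[\nabla u,\nabla u]$ itself. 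Absorbing the last two quantities on the left-hand side is precisely what turns the coefficient $\tfrac12$ into $\tfrac12+\omega(\eps)$ and makes the conclusion an inequality rather than an identity. Throughout, the main bookkeeping burden is to make sure no ``bare constant'' is ever generated: a factor $\abs{\nabla u_1}$ or $\abs{u_2}$ must always be kept for a Cauchy--Schwarz, Poincaré or orthogonality estimate rather than being traded for the bound $\omega(\eps)$.
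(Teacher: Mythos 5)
Your proposal matches the paper's strategy in its opening moves: you introduce the same antiderivative $\Phi(s,t)=\int_0^t f(s,\tau)/g(s,\tau)\,\de\tau$ (the paper calls it $H$), use $\nabla^2 u[\nabla u,\nabla u]=\tfrac12\scalprod{\nabla u,\nabla(\abs{\nabla u}^2)}$ together with $f\nabla(\abs{\nabla u}^2)=g\nabla\Phi+\omega(\eps)\nabla u$, integrate by parts to land on $-\tfrac12\int\div(g\nabla u)\,\Phi(u,\abs{\nabla u}^2)$, and then replace $\Phi$ by $\abs{\nabla u}^2$, routing the sign-dependent error through $[\div(g\nabla u)]^+$. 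That part is essentially identical to the paper.

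Where you diverge is the passage from $\abs{\nabla u}^2$ to $\abs{\nabla u_2}^2$. You propose to expand $\abs{\nabla u}^2-\abs{\nabla u_2}^2$ and iterate integrations by parts, arguing that each pass gains a factor $\omega(\eps)$ and appealing to a fixed-point/absorption argument to terminate. You flag this yourself as ``the delicate point'', and indeed it is the unverified step: after one integration by parts the derivative of the nonlinear coefficient reintroduces the full $\nabla^2 u$, and one must keep careful track of both the extra smallness gained and the blow-up risk from the uncontrolled Hessian; you never actually prove the geometric decay or set up the absorption inequality. The paper avoids the iteration entirely with a second antiderivative: it sets $G(s,t)=\int_0^t g(s,t')\,\de t'$, observes $\nabla\big(G(u,\abs{\nabla u}^2)\big)=2g\,\nabla^2 u\cdot\nabla u+\omega(\eps)\nabla u$, and splits
\begin{equation*}
2\int_{\S^{n-1}} g\,\nabla^2 u[\nabla u,\nabla u]
=2\int_{\S^{n-1}} g\,\nabla^2 u[\nabla u,\nabla u_1]
+2\int_{\S^{n-1}} g\,\nabla^2 u_2[\nabla u,\nabla u_2]
+\omega(\eps)\int_{\S^{n-1}}\abs{\nabla u}^2 ,
\end{equation*}
where the first integral equals $\int\nabla G\cdot\nabla u_1=-\int G\,\lapl u_1=\omega(\eps)\int\abs{\nabla u}^2$ (since $G\approx\abs{\nabla u}^2$ and $\norm{\lapl u_1}_\infty=\omega(\eps)$), and the second equals $\int g\,\nabla u\cdot\nabla(\abs{\nabla u_2}^2)=-\int\div(g\nabla u)\abs{\nabla u_2}^2$. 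Thus the whole ``only $u_2$ matters'' step is a single integration by parts, with no iteration to control. I would recommend replacing your iterative scheme by this second-antiderivative argument: it is precisely the missing idea that makes the bookkeeping terminate in one step.
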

\begin{proof}
    First of all, let us remark that $\abs{\nabla u_1}, \abs{\nabla u_2}$ can be estimated with $\omega(\eps)$. 
    Also $\norm{\nabla^2 u_1}=\omega(\eps)$ since $\norm{\nabla^2 u_1}_\infty\lesssim \norm{\nabla u_1}_{\infty}$ because all norms are equivalent on a finite-dimensional space (and $u_1$ lives in the finite-dimensional space generated by eigenfunctions of $-\lapl$ with eigenvalues $< \lambda$).
    Loosely speaking, our only enemy is $\nabla^2 u_2$. 
    
    Let $h:U\times U\to\R$ be the function such that $f(s,t)=g(s,t)h(s,t)$. The function $h$ is $C^1$ and $h(0,0)=1$.
    Let $H(s,t)\defeq\int_0^t h(s,t')\de t'$. 
    Notice that $\partial_s H(u,\abs{\nabla u}^2) = \omega(\eps)$, $\partial_t H(s, t) = h(s,t)$ and, for a certain value $0<q=\omega(\eps)$, we have
    \begin{equation}\label{eq:elem_ineq_on_g}
        (1-q)\abs{\nabla u}^2 \le H(u, \abs{\nabla u}^2)
        \le (1+q)\abs{\nabla u}^2 \,.
    \end{equation}
    By definition of $h$ and $H$, we have
    \begin{equation}\label{eq:tmp52467}\begin{aligned}
        \int_{\S^{n-1}}f(u,\abs{\nabla u}^2)\nabla^2 u[\nabla u, \nabla u]
        &=
        \int_{\S^{n-1}}g(u,\abs{\nabla u}^2)h(u,\abs{\nabla u}^2)\nabla^2 u[\nabla u, \nabla u]\\
        &=
        \frac12\int_{\S^{n-1}}g(u,\abs{\nabla u}^2)\nabla u\cdot\nabla\big(H(u,\abs{\nabla u}^2)\big)
        +\omega(\eps)\int_{\S^{n-1}}\abs{\nabla u}^2
        \,.
    \end{aligned}\end{equation}
    Integrating by parts \cref{eq:tmp52467} and applying \cref{eq:elem_ineq_on_g}, we get
    \begin{equation}\label{eq:tmp52468}\begin{aligned}
        \int_{\S^{n-1}}&g(u,\abs{\nabla u}^2)\nabla u\cdot\nabla\big(H(u,\abs{\nabla u}^2)\big)=
        -\int_{\S^{n-1}}\div(g\, \nabla u) H(u, \abs{\nabla u}^2) \\
        &= 
        -(1-q)\int_{\S^{n-1}}\div(g\, \nabla u) \abs{\nabla u}^2
        +\int_{\S^{n-1}}\div(g\,\nabla u) \Big((1-q)\abs{\nabla u}^2 - H(u,\abs{\nabla u}^2)\Big) \\
        &\ge 
        -(1-q)\int_{\S^{n-1}}\div(g\, \nabla u) \abs{\nabla u}^2
        -2q\int_{\S^{n-1}}\div(g\,\nabla u)^+\abs{\nabla u}^2 \\
        &=
        2(1+\omega(\eps))\int_{\S^{n-1}}g\, \nabla^2 u[\nabla u, \nabla u]
        +\omega(\eps)\int_{\S^{n-1}}\div(g\,\nabla u)^+\abs{\nabla u}^2 \,.        
    \end{aligned}\end{equation}
    The second term is fine (as it appears in the left-hand side of the sought inequality), hence we focus on the first term.
    Let $G(s,t)\defeq\int_0^t g(s,t')\de t'$. 
    Notice that $\partial_s G(u,\abs{\nabla u}^2) = \omega(\eps)$, $\partial_t G(s, t) = g(s,t)$ and $G(u,\abs{\nabla u}^2) = (1+\omega(\eps))\abs{\nabla u}^2$.
    
    It holds
    \begin{equation*}
        \nabla\Big(G(u, \abs{\nabla u}^2)\Big) 
        = 2g(u, \abs{\nabla u}^2) \nabla^2 u \cdot \nabla u + \omega(\eps)\nabla u
        \,.
    \end{equation*}
    Decomposing $u=u_1+u_2$ (recalling that $\norm{\nabla^2 u_1}=\omega(\eps)$) and integrating by parts, we have
    \begin{equation}\label{eq:tmp52469}\begin{aligned}
        2\int_{\S^{n-1}}&g\, \nabla^2 u[\nabla u, \nabla u]
        =
        2\int_{\S^{n-1}}g\, \nabla^2 u[\nabla u, \nabla u_1]
        +
        2\int_{\S^{n-1}}g\, \nabla^2 u_2[\nabla u, \nabla u_2]
        +\omega(\eps)\int_{\S^{n-1}} \abs{\nabla u}^2 \\
        &=
        \int_{\S^{n-1}}\nabla (G(u, \abs{\nabla u}^2))\cdot \nabla u_1
        +
        \int_{\S^{n-1}}g\, \nabla u\cdot\nabla(\abs{\nabla u_2}^2)
        +\omega(\eps)\int_{\S^{n-1}} \abs{\nabla u}^2 \\
        &=
        -\int_{\S^{n-1}}G(u, \abs{\nabla u}^2)\cdot \lapl u_1
        -\int_{\S^{n-1}}\div(g\, \nabla u)\abs{\nabla u_2}^2
        +\omega(\eps)\int_{\S^{n-1}} \abs{\nabla u}^2 \\
        &=
        -\int_{\S^{n-1}}\div(g\, \nabla u)\abs{\nabla u_2}^2
        +\omega(\eps)\int_{\S^{n-1}} \abs{\nabla u}^2  \,.
    \end{aligned}\end{equation}
    The statement follows from \cref{eq:tmp52467,eq:tmp52468,eq:tmp52469}.
\end{proof}

\begin{lemma}\label{lem:int_parts_interpol}
    Let $u:\S^{n-1}\to(-1,\infty)$ be a function such that $\norm{u}_{C^1}\le\eps$.
    Let $\lambda_1^{\nabla^2 u}\le \lambda_2^{\nabla^2 u} \le \cdots \le \lambda_{n-1}^{\nabla^2 u}$ be the $n-1$ eigenvalues of $\nabla^2 u$ (the eigenvalues depend implicitly on $x\in\S^{n-1}$).
    
    We have
    \begin{equation*}
        \int_{\S^{n-1}} \nabla^2 u[\nabla u,\nabla u]
        \ge
        -\frac13\int_{\S^{n-1}} \big(\lambda_2^{\nabla^2 u} + \lambda_3^{\nabla^2 u} + \cdots + \lambda_{n-1}^{\nabla^2 u}\big)\abs{\nabla u}^2 \,.
    \end{equation*}
\end{lemma}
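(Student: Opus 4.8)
The plan is to rewrite $\int_{\S^{n-1}} \nabla^2 u[\nabla u, \nabla u]$ as a divergence-type integral via the identity $\nabla^2 u[\nabla u, \nabla u] = \tfrac12 \nabla u \cdot \nabla(\abs{\nabla u}^2)$, integrate by parts to get $-\tfrac12 \int \lapl u \, \abs{\nabla u}^2$ (up to lower-order curvature terms of the sphere, which produce only $\smallo$-type corrections since $\norm{u}_{C^1}\le\eps$), and then estimate $\lapl u = \sum_i \lambda_i^{\nabla^2 u}$ from above by dropping the most negative eigenvalue $\lambda_1^{\nabla^2 u}$. This would give the bound with constant $\tfrac12$ rather than $\tfrac13$; the improvement to $\tfrac13$ must come from a genuinely sharper argument, so the crude ``integrate by parts once'' approach is not enough.

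To get the factor $\tfrac13$, I would use a symmetrization trick: integrate by parts in three cyclically equivalent ways. Write $I \defeq \int_{\S^{n-1}} \nabla^2 u[\nabla u, \nabla u]$. One expects an identity of the shape $3I = -\int_{\S^{n-1}} (\lapl u)\abs{\nabla u}^2 + (\text{terms that are sign-definite or that cancel}) + \omega(\eps)(\cdots)$, exploiting that $\nabla^2 u$ is a \emph{symmetric} bilinear form so that the ``third-order'' object $\nabla u \otimes \nabla u \otimes \nabla u$ paired against $\nabla(\nabla^2 u)$ is totally symmetric and each integration by parts moves a derivative around the cyclic triple $(\nabla u, \nabla u, \nabla u)$. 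Concretely, from $\partial_i \partial_j u \, \partial_i u \, \partial_j u$ one can integrate by parts moving the $\partial_i$ onto $\partial_j u \, \partial_j u$, or moving a $\partial_j$ onto $\partial_i u$; combining these gives two expressions for $I$, both involving $-\int (\lapl u)\abs{\nabla u}^2$ plus terms like $\int \nabla^2 u[\nabla^2 u \cdot \nabla u, \text{(stuff)}]$ — but those extra terms are quadratic in $\nabla^2 u$ and linear in $\nabla u$, hence \emph{not} obviously small, so this naive bookkeeping also stalls. The correct move is to diagonalize: at each point pick the orthonormal frame diagonalizing $\nabla^2 u$ with eigenvalues $\lambda_1\le\cdots\le\lambda_{n-1}$ and write $\nabla^2 u[\nabla u,\nabla u] = \sum_k \lambda_k (\partial_k u)^2$, and then re-express $\int \sum_k \lambda_k (\partial_k u)^2$ using that $\int \lambda_k (\partial_k u)^2$ is, after integration by parts, controlled by $\int \lapl u \,(\partial_k u)^2$ plus harmless terms — the point being that summing over the $n-2$ \emph{largest} eigenvalues (excluding $\lambda_1$) and using that $(\partial_k u)^2 \le \abs{\nabla u}^2$ gives the clean right-hand side.

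The main obstacle is precisely controlling the terms quadratic in $\nabla^2 u$ that appear after integration by parts: one cannot bound $\nabla^2 u_2$ pointwise (that is exactly the difficulty flagged in Section~\ref{sec:cubic_term}). I expect the resolution is that one never needs a pointwise bound — instead the extra terms are absorbed by a careful application of the integration-by-parts identity in a way that only $\lapl u = \sum_i \lambda_i^{\nabla^2 u}$ appears (a scalar quantity over which we do have integral control via the mean-curvature constraint in the applications), combined with the elementary linear-algebra fact that for a symmetric operator with ordered eigenvalues $\lambda_1 \le \dots \le \lambda_{n-1}$ and any unit vector $e$, $\langle \nabla^2 u \, e, e\rangle \ge \lambda_1 \ge (\lambda_1 + \dots + \lambda_{n-1}) - (\lambda_2 + \dots + \lambda_{n-1})$, together with a convexity/averaging argument over the choice of direction that turns the single factor $\tfrac12$ into $\tfrac13$ (the $3$ being $n-1$ in the borderline worst case, or more likely a genuine constant coming from the cyclic-symmetry count of the cubic form). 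I would finish by checking the $\omega(\eps)$ error terms — arising from the curvature of $\S^{n-1}$ in the integration by parts and from $\norm{u}_{C^1}\le\eps$ — are all subsumed, but since the statement has \emph{no} $\omega(\eps)$ term, the integration-by-parts identities used must be \emph{exact} on $\S^{n-1}$, which suggests the proof relies on an exact Bochner-type or Pohozaev-type identity rather than a perturbative expansion.
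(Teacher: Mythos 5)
Your diagnosis of the problem is right: a single integration by parts gives
\[
\int_{\S^{n-1}}\nabla^2 u[\nabla u,\nabla u]
= -\tfrac12\int_{\S^{n-1}}\lapl u\,\abs{\nabla u}^2\,,
\]
and this identity is exact on $\S^{n-1}$ (the first-order identity $\nabla(\abs{\nabla u}^2)=2\,\nabla^2 u\cdot\nabla u$ has no curvature correction), so no $\omega(\eps)$ terms should appear — you correctly infer this from the shape of the statement. You also correctly sense that the $\tfrac13$ must come from a refinement that exploits the eigenvalue ordering. But the speculative routes you then try — cyclic integrations by parts producing terms quadratic in $\nabla^2 u$, averaging over directions, a Bochner- or Pohozaev-type identity — either stall (as you admit) or are not what is needed, and you never arrive at a complete argument.

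The missing idea is much more elementary than any of these. Write $I\defeq\int\nabla^2 u[\nabla u,\nabla u]$ and split $I=\tfrac13 I+\tfrac23 I$; apply the exact integration-by-parts identity only to the $\tfrac23$-piece, obtaining
\[
I=\tfrac13\int_{\S^{n-1}}\Big(\nabla^2 u[\nabla u,\nabla u]-\lapl u\,\abs{\nabla u}^2\Big)\,.
\]
Now estimate the integrand pointwise: $\nabla^2 u[\nabla u,\nabla u]\ge\lambda_1^{\nabla^2 u}\abs{\nabla u}^2$ and $\lapl u=\sum_{i=1}^{n-1}\lambda_i^{\nabla^2 u}$, so the two occurrences of $\lambda_1^{\nabla^2 u}$ cancel and the integrand is bounded below by $-\big(\lambda_2^{\nabla^2 u}+\cdots+\lambda_{n-1}^{\nabla^2 u}\big)\abs{\nabla u}^2$. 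No quadratic-in-$\nabla^2 u$ error terms are produced, because the identity is used only on a fraction of $I$ — there is nothing to cancel or absorb afterwards. The constant $\tfrac13$ is precisely the weight $a$ solving $a=\tfrac{1-a}{2}$, i.e., the weight that makes the smallest-eigenvalue contributions cancel pointwise; it has nothing to do with a cyclic-symmetry count, a dimensional borderline case, or an averaging argument.
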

\begin{proof}
    Integrating by parts, we have
    \begin{equation*}
        \int_{\S^{n-1}} \nabla^2 u[\nabla u, \nabla u]
        =
        \frac12\int_{\S^{n-1}}\nabla u\cdot \nabla\big(\abs{\nabla u}^2\big)
        =
        -\frac12\int_{\S^{n-1}}\lapl u \abs{\nabla u}^2 \,.
    \end{equation*}
    Applying the latter identity we obtain
    \begin{align*}
        \int_{\S^{n-1}} \nabla^2 u[\nabla u, \nabla u]
        &=
        \frac13\int_{\S^{n-1}} \nabla^2 u[\nabla u, \nabla u]
        -\frac13\int_{\S^{n-1}}\lapl u \abs{\nabla u}^2 
        =
        \frac13\int_{\S^{n-1}} \nabla^2 u[\nabla u, \nabla u]-\lapl u \abs{\nabla u}^2 \\
        &\ge
        \frac13\int_{\S^{n-1}} \lambda_1^{\nabla^2 u}\abs{\nabla u}^2-(\lambda_1^{\nabla^2 u}+\lambda_2^{\nabla^2 u}+\cdots+\lambda_{n-1}^{\nabla^2 u}) \abs{\nabla u}^2 \\
        &=
        -\frac13\int_{\S^{n-1}} (\lambda_2^{\nabla^2 u}+\lambda_2^{\nabla^2 u}+\cdots+\lambda_{n-1}^{\nabla^2 u}) \abs{\nabla u}^2
        \,.
    \end{align*}
\end{proof}

\begin{remark}
    Let us briefly comment on the two lemmas (we implicitly refer to the outline of the proof given in the introduction):
    \begin{itemize}
        \item The first lemma is effective when we have an a priori bound only on a pseudo-Laplacian of $u$ (i.e., $\div(g\nabla u)$ for a certain $g=g(u,\abs{\nabla u}^2)$) and not on its full Hessian.
        It comes handy for two orthogonal reasons. First, it allows us to change the nonlinearity as we desire (notice that in the left-hand side we have $f$ while in the right-hand side we have $g$). Furthermore, we can control from below the left-hand side using only the high-frequency component of the gradient (that is $\nabla u_2$, which appears on the right-hand side). Being able to use only the high-frequency component is fundamental as such component satisfies a much stronger Poincaré inequality.
        \item The second lemma is effective when we have an a priori bound from above on the full Hessian of $u$.
        Through a tricky linear combination of integrations by parts, we manage to control from below $\int \nabla^2 u[\nabla u, \nabla u]$ with $-\frac{n-2}3\int \abs{\nabla u}^2$ if $\nabla^2 u\le \id_{n-1}$ . This estimate is an improvement over the more natural $\int \nabla^2 u[\nabla u, \nabla u] = -\frac12\int \lapl u\abs{\nabla u}^2 \ge -\frac{n-1}2\int \abs{\nabla u}^2$ (under the assumption $\nabla^2 u\le \id_{n-1}$) which would not be sufficient for our purposes.
    \end{itemize}
\end{remark}

\section{Proofs of the main theorems}\label{sec:main_proofs}
\begin{proof}[Proof of \cref{thm:main_statement_nuclear}]
    Let $K$ be a $C^1$ $\eps_1$-perturbation of a ball with $\eps_1>0$. We show that the statement holds for $K$ if $\eps_1$ is sufficiently small.
    
    Without loss of generality we may assume that $\Per(K)=\Per(B_1)$, the barycenter of $K$ is the origin and it holds\footnote{The assumption that $K$ is a $C^1$ $\eps_1$-perturbation of a ball does not guarantee that we can choose $\bar x=0_{\R^n}$ in \cref{eq:def_eps_perturbation}. On the other hand, since the barycenter of $K$ is the origin, it is not hard to check that $K$ is a $C^1$ $\omega(\eps_1)$-perturbation of a ball with $\bar x=0_{\R^n}$.}
    \begin{equation*}
        \abs*{\nu(x) - \frac{x}{\abs{x}}} = \omega(\eps_1)
    \end{equation*}
    for all $x\in\partial K$. In particular $K$ is star-shaped.
    
    Let $u:\S^{n-1}\to(-1,\infty)$ be the function such that $(1+u(x))x\in\partial K$ for all $x\in\S^{n-1}$.
    Thanks to \cref{lem:nablau_vs_normal}, we have that $\eps\defeq\norm{u}_{C^1} = \omega(\eps_1)$.

    The condition on the perimeter (recall \cref{eq:perimeter_formula}) $\int_{\S^{n-1}}1 =\Per(K)= \int (1+u)^{n-1}\sqrt{1+\abs{v}^2}$ implies
    \begin{equation} \label{eq:perimeter_constraint}
        \int_{\S^{n-1}} u = -\Big(\frac{n-2}2+\omega(\eps)\Big) \int_{\S^{n-1}} u^2 - \Big(\frac1{2(n-1)}+\omega(\eps)\Big)\int_{\S^{n-1}}\abs{\nabla u}^2
            = \omega(\eps)\big(\norm{u}_{L^2} + \norm{\nabla u}_{L^2}\big)
            \,,
    \end{equation}
    and the condition on the barycenter of $K$ implies
    \begin{equation}\label{eq:barycenter_constraint}
        0 = \int_{\S^{n-1}} (1+u)^{n+1}x \implies \abs*{\int_{\S^{n-1}} u\, x} \le \omega(\eps)\norm{u}_{L^2} \,. 
    \end{equation}
    Notice that \cref{eq:perimeter_constraint,eq:barycenter_constraint} imply that $u$ is \emph{almost orthogonal} to the constant function $1$ and to the coordinate functions $x_1, x_2, \dots, x_n$ (namely $\int u\psi = \omega(\eps)\norm{u}_{L^2}$ whenever $\psi$ is one of those functions).
    Since $1$ and $x_1, \dots, x_n$ are the first eigenfunctions of the Laplace-Beltrami operator on the sphere, it follows that (recall that the first eigenvalues are $0, n-1, 2n,\dots$)
    \begin{equation}\label{eq:poincare_low}
        \int_{\S^{n-1}}\abs{\nabla u}^2 - (n-1)u^2\ge \Big(\frac{n+1}{2n}+\omega(\eps)\Big)\int_{\S^{n-1}}\abs{\nabla u}^2 \,.
    \end{equation}
    
    Plugging \cref{eq:perimeter_constraint} into \cref{eq:integral_H_formula} yields
    \begin{equation}\label{eq:tmp7531}\begin{aligned}
        \int_{\partial K} H - (n-1)\Haus^{n-1}(\S^{n-1})
        &=
        \frac{n-2}2\int_{\S^{n-1}}\abs{\nabla u}^2-\frac{(n-1)(n-2)}2\int_{\S^{n-1}} u^2 \\
        &\hspace{12pt}+ \int_{\S^{n-1}} (1+u)^{n-3}\frac{\nabla^2u[\nabla u, \nabla u]}{(1+u)^2 + \abs{\nabla u}^2}+\omega(\eps)\int_{\S^{n-1}} u^2 + \abs{\nabla u}^2 \,.
    \end{aligned}\end{equation}
    
    Applying \cref{lem:freq_decomposition} with $\lambda=\lambda(n)$ sufficiently large, with $f(s,t) = \frac{(1+s)^{n-3}}{(1+s)^2+t}$ and $g(s,t) = 1$, we get (recall that $\norm{\nabla^2 u_1}=\omega(\eps)$)
    \begin{equation}\label{eq:tmp45326}
        \int_{\S^{n-1}} (1+u)^{n-3}\frac{\nabla^2u[\nabla u, \nabla u]}{(1+u)^2 + \abs{\nabla u}^2} 
        \ge (1+\omega(\eps))\int_{\S^{n-1}}\nabla^2 u_2[\nabla u_2,\nabla u_2] 
        + \omega(\eps)\int_{\S^{n-1}}((\lapl u)^++1)\abs{\nabla u}^2 \,.
    \end{equation}
    
    Let $\lambda^{\II}_1(x)\le\dots\le\lambda^{\II}_{n-1}(x)$ be the eigenvalues of $\II_{T(x)}$, let $\lambda^{\nabla^2 u}_1(x)\le\dots\le\lambda^{\nabla^2 u}_{n-1}(x)$ be the eigenvalues of $\nabla^2 u(x)$, and let $\lambda^{\nabla^2 u_2}_1(x)\le\dots\le\lambda^{\nabla^2 u_2}_{n-1}(x)$ be the eigenvalues of $\nabla^2 u_2(x)$.
    As a consequence of $\norm{\nabla^2 u_1}=\omega(\eps)$, we have $\lambda_i^{\nabla^2 u} = \omega(\eps) + \lambda_i^{\nabla^2 u_2}$.
    Thanks to \cref{eq:second_fund_formula}, we know
    \begin{equation*}
        \lambda^{\II}_i = 1+\omega(\eps) - (1+\omega(\eps))\lambda^{\nabla^2 u}_{n-i} 
        = 1+\omega(\eps) - (1+\omega(\eps))\lambda^{\nabla^2 u_2}_{n-i}
    \end{equation*}
    and thus there is $0<q=\omega(\eps)$ such that
    \begin{equation*}
        \big(\lambda^{\II}_i\big)^- \ge \frac12\big(1+q-\lambda_{n-i}^{\nabla^2 u_2}\big)^- \,.
    \end{equation*}

    Therefore we have (recall that $\norm{\emptyparam}_1$ denotes the nuclear norm, i.e., the sum of the absolute values of the eigenvalues)
    \begin{equation}\label{eq:tmp6728}
        \norm{\II}_1-H \ge \sum_{i=1}^{n-1} \big(1 + q - \lambda^{\nabla^2 u_2}_i\big)^- \,.
    \end{equation}
    Applying \cref{lem:int_parts_interpol} to the function $u_2$, we get
    \begin{equation}\label{eq:tmp921}
        \int_{\S^{n-1}} \nabla^2 u_2[\nabla u_2,\nabla u_2]
        \ge
        -\frac13\int_{\S^{n-1}} \big(\lambda_2^{\nabla^2 u_2} + \lambda_3^{\nabla^2 u_2} + \cdots + \lambda_{n-1}^{\nabla^2 u_2}\big)\abs{\nabla u_2}^2 \,.
    \end{equation}
    
    The two estimates \cref{eq:tmp45326,eq:tmp921} imply (recall that $\norm{\nabla^2 u_1}=\omega(\eps)$)
    \begin{equation}\label{eq:tmp59908}
    \begin{aligned}
        \int_{\S^{n-1}} (1+u)^{n-3}\frac{\nabla^2u[\nabla u, \nabla u]}{(1+u)^2 + \abs{\nabla u}^2} 
        \ge
        &\sum_{i=2}^{n-1}\int_{\S^{n-1}} \Big(-\frac13+\omega(\eps)\Big)\lambda^{\nabla^2 u_2}_i\abs{\nabla u_2}^2 \\
        &+ \sum_{i=1}^{n-1}\int_{\S^{n-1}}\omega(\eps)(\lambda^{\nabla^2 u_2}_i)^+\abs{\nabla u}^2 
        + \omega(\eps)\int_{\S^{n-1}}\abs{\nabla u}^2\,.
    \end{aligned}\end{equation}
    For any $t\in\R$, the following two elementary inequalities hold (recall that $0<q=\omega(\eps)$)
    \begin{align*}
        \Big(-\frac13+\omega(\eps)\Big)t\abs{\nabla u_2}^2 + \frac14(1+q-t)^-
        &\ge
        \Big(-\frac13+\omega(\eps)\Big)\abs{\nabla u_2}^2 \,,\\
        \omega(\eps)t^+\abs{\nabla u}^2 + \frac14(1+q-t) 
        &\ge 
        \omega(\eps)\abs{\nabla u}^2 \,,
    \end{align*}
    and thus, summing \cref{eq:tmp6728,eq:tmp59908}, we deduce
    \begin{equation*}
        \int_{\S^{n-1}} (1+u)^{n-3}\frac{\nabla^2u[\nabla u, \nabla u]}{(1+u)^2 + \abs{\nabla u}^2} + \frac12(\norm{\II}_1 - H)
        \ge 
        -\frac{n-2}3\int_{\S^{n-1}}\abs{\nabla u_2}^2
        +\omega(\eps)\int_{\S^{n-1}}\abs{\nabla u}^2 \,.
    \end{equation*}
    Plugging the latter estimate in \cref{eq:tmp7531}, we obtain\footnote{We are implicitly using $\frac12\int_{\S^{n-1}}(\norm{\II}_1-H) \le \int_{\partial K} (\norm{\II}_1-H)$, which is true for $\eps$ sufficiently small (since the Jacobian \cref{eq:jacobian} is $1+\omega(\eps)$).}
    \begin{equation}\label{eq:tmp526}\begin{aligned}
        \int_{\partial K} \norm{\II}_1 - (n-1)\Haus^{n-1}(\S^{n-1})
        &\ge
        \frac{n-2}2\int_{\S^{n-1}}\abs{\nabla u_1}^2 + \frac{n-2}6\int_{\S^{n-1}}\abs{\nabla u_2}^2 \\
        &\hspace{12pt}-\frac{(n-1)(n-2)}2\int_{\S^{n-1}} u_1^2 + u_2^2 
        +\omega(\eps)\int_{\S^{n-1}} u^2 + \abs{\nabla u}^2 \,.
    \end{aligned}\end{equation}
    
    Since $u_2$ belongs to the subspace generated by eigenfunctions of the Laplace operator with eigenvalues larger than $\lamdba$, it holds
    \begin{equation}\label{eq:poincare_high}
        \int_{\S^{n-1}} \abs{\nabla u_2}^2 \ge \lambda\int_{\S^{n-1}}u_2^2 \,.
    \end{equation}
    Repeating the argument used to establish \cref{eq:poincare_low}, we can also prove
    \begin{equation}\label{eq:poincare_low_u1}
        \int_{\S^{n-1}}\abs{\nabla u_1}^2 - (n-1)u_1^2\ge \frac{n+1}{2n}\int_{\S^{n-1}}\abs{\nabla u_1}^2 +\omega(\eps)\int_{\S^{n-1}}\abs{\nabla u}^2\,.
    \end{equation}

    Using the two Poincaré inequalities \cref{eq:poincare_low_u1,eq:poincare_high}, the estimate \cref{eq:tmp526} becomes
    \begin{equation*}\begin{aligned}
        \int_{\partial K} \norm{\II}_1 - (n-1)\Haus^{n-1}(\S^{n-1})
        \ge &
        \frac{(n-2)(n+1)}{4n}\int_{\S^{n-1}}\abs{\nabla u_1}^2 
        + 
        (n-2)\Big(\frac{\lambda}6-\frac{(n-1)}2\Big)\int_{\S^{n-1}}\abs{\nabla u_2}^2 \\
        &+\omega(\eps)\int_{\S^{n-1}} u^2 + \abs{\nabla u}^2 \,.
    \end{aligned}\end{equation*}
    If $\eps$ is sufficiently small and $\lambda$ is sufficiently large, the right-hand side of the latter inequality is nonnegative and therefore the statement follows.
\end{proof}

\begin{proof}[Proof of \cref{thm:main_statement_axial}]
    We repeat the proof of \cref{thm:main_statement_nuclear} verbatim up to \cref{eq:tmp7531}.
    
    Without loss of generality, we may assume that the body $K$ is symmetric with respect to the line $\{te_1:\ t\in\R\}$. Let $V:[0,\pi]\to(-1,\infty)$ be the function such that $V(\theta)\defeq u(\cos(\theta),\sin(\theta),0,0,\dots,0)$. Notice that, since we assume that $\partial K$ is smooth, $\dot V(0)=\dot V(\pi)=0$.
    
    Whenever in a formula we have both $\theta$ and $x$, it is always assumed implicitly that $\theta=\theta(x)=\cos(x_1)$. The function $u$ is always implicitly evaluated at $x$, while the function $V$ is always implicitly evaluated at $\theta$.
    
    As a consequence of the coarea formula, we have
    \begin{equation*}
        \int_{\S^{n-1}} f(\theta)\de \Haus^{n-1}(x) = \Haus^{n-2}(\S^{n-2})\int_{0}^\pi f(\theta)\sin(\theta)^{n-2} \de\theta
    \end{equation*}
    for any continuous function $f:[0,\pi]\to\R$.
    
    Standard computations yield the following formulas\footnote{The formula for $\nabla^2u[\nabla u,\nabla u]$ can be obtained by taking two derivatives of $u$ along a geodesic going from the north pole to the south pole. 
    Since we already know how to integrate with respect to $\theta$ and we are able to compute the gradient of a function which depends only on $\theta$, the formula for the Laplacian follows from the identity $\int \lapl u\, \varphi = -\int \nabla u\cdot\nabla\phi$ for any function $\phi(x) = \Phi(\theta)$.}
    \begin{align*}
        \abs{\nabla u} = \abs{\dot V} \,,\quad
        \nabla^2 u[\nabla u,\nabla u] &= \dot V^2\ddot V \,,\quad
        \lapl u = \ddot V + (n-2)\dot V\frac{\cos(\theta)}{\sin(\theta)} \,.
    \end{align*}
    
    Using these formulas, the cubic term appearing in \cref{eq:tmp7531} becomes
    \begin{equation}\label{eq:tmp942}
        \int_{\S^{n-1}} (1+u)^{n-3}\frac{\nabla^2u[\nabla u, \nabla u]}{(1+u)^2 + \abs{\nabla u}^2}
        = \Haus^{n-2}(\S^{n-2}) \int_{0}^\pi(1+V)^{n-3}\frac{\ddot V\dot V^2}{(1+V)^2+\dot V^2}\sin(\theta)^{n-2} \de\theta\,.
    \end{equation}
    Let $g(V,\dot V)\defeq (1+V)^{n-3}\frac{\dot V^2}{(1+V)^2+\dot V^2}$ and $G(V,\dot V)\defeq\int_0^{\dot V} f(V, s)\de s$. 
    One can check that $G(V, \dot V)=(\frac13+\omega(\eps))\dot V^3$ and $\abs{\partial_1G(V,\dot V)}\lesssim \abs{\dot V}^2$. 
    Thus the expression in \cref{eq:tmp942} is equal to (up to the constant factor $\Haus^{n-2}(\S^{n-2})$)
    \begin{equation}\label{eq:tmp943}\begin{aligned}
        &= \int_{0}^\pi \partial_2 G(V,\dot V)\ddot V \sin(\theta)^{n-2}\de\theta
        = \int_{0}^\pi \Big(\frac{\de}{\de\theta}G(V,\dot V)-\partial_1G(V,\dot V)\dot V\Big)\sin(\theta)^{n-2}\de\theta \\
        &=
        -\frac{n-2}{3}\int_0^{\pi}(1+\omega(\eps))\dot V^3\cos(\theta)\sin(\theta)^{n-3}\de\theta + \omega(\eps)\int_{\S^{n-1}}\abs{\nabla u}^2 \,,
    \end{aligned}\end{equation}
    where in the last step we have integrated by parts and we have used that $\abs{\dot V}=\abs{\nabla u}=\omega(\eps)$.
    
    It remains to take care of the term $\int \dot V^3\cos(\theta)\sin(\theta)^{n-3}$. Notice that this is not trivially comparable to $\int\abs{\nabla u}^2 = \int \dot V^2\sin(\theta)^{n-2}$ because the exponents of $\dot V$ and $\sin(\theta)$ are different in the two expressions.
    
    Let $\theta_0=\theta_0(\eps)\defeq \sqrt{\eps}$, so that $\theta_0=\omega(\eps)$ and $\abs{\dot V(\theta)}=\omega(\eps)\sin(\theta)$ for each $\theta_0<\theta<\pi-\theta_0$.

    We have
    \begin{equation}\label{eq:tmp4216}\begin{aligned}
    \int_0^{\pi}&(1+\omega(\eps))\dot V^3\cos(\theta)\sin(\theta)^{n-3}\de\theta \\
    &=
    \int_0^{\theta_0}(1+\omega(\eps))\dot V^3\sin(\theta)^{n-3}\de\theta
    -\int_{\pi-\theta_0}^{\pi}(1+\omega(\eps))\dot V^3\sin(\theta)^{n-3}\de\theta
    +
    \omega(\eps)\int_{\S^{n-1}} \abs{\nabla u}^2
    \end{aligned}\end{equation}
    
    In order to proceed, we are going to show that, for an appropriate universal constant $C_0=C_0(n)$, we have
    \begin{equation}\label{eq:estimate_poles}
        \dot V(\theta) \le (1+\omega(\eps))\theta + \frac{C_0}{\theta^{n-2}}\int_{\partial K}H^-\,,
    \end{equation}
    for each $0\le \theta\le \theta_0$, where $H^-=-\min(0,H)$. 
    From \cref{eq:mean_curvature_formula} we deduce
    \begin{align*}
        H &= \frac{1}{\sqrt{(1+V)^2+\dot V^2}}
        \Big(
        (n-1)-\frac{\ddot V + (n-2)\dot V\frac{\cos(\theta)}{\sin(\theta)}}{1+V} + \frac{\dot V^2}{(1+V)^2+\dot V^2} 
        + \frac{\ddot V\dot V^2}{(1+V)((1+V)^2+\dot V^2)}
        \Big)
        \\
        &=
        n-1- \omega(\eps) - 
        (1+\omega(\eps))\Big(\ddot V + (n-2)\dot V\Big(1+\frac{\dot V^2}{(1+V)^2}\Big)\frac{\cos(\theta)}{\sin(\theta)}\Big)
    \end{align*}
    and, through suitable algebraic manipulations, we obtain
    \begin{align*}
        \frac{\de}{\de\theta}\big(\dot V\sin(\theta)^{n-2}\big) 
        &= \big(n-1+\omega(\eps)-H(1+\omega(\eps))\big)\sin(\theta)^{n-2} - (n-2)\frac{\dot V^3}{(1+V)^2}\cos(\theta)\sin(\theta)^{n-3} \\
        &\le
        (n-1+\omega(\eps)+2H^-)\sin(\theta)^{n-2}
        - (n-2)\frac{\dot V^3}{(1+V)^2}\cos(\theta)\sin(\theta)^{n-3}
        \,.
    \end{align*}
    
    Now, let us fix $0<\theta_2<\theta_0$. 
    If $\dot V(\theta_2)\le 0$, then \cref{eq:estimate_poles} is trivial for $\theta=\theta_2$. Otherwise, let $0\le \theta_1<\theta_2$ be the largest value  such that $\dot V(\theta_1)=0$ (such a value exists because $\dot V(0)=0$). Since $\dot V(\theta)\ge 0$ for any $\theta\in[\theta_1,\theta_2]$, we deduce
    \begin{equation*}
        \frac{\de}{\de\theta}\big(\dot V\sin(\theta)^{n-2}\big) \le 
        \big(n-1+\omega(\eps)+2H^-\big)\sin(\theta)^{n-2}
    \end{equation*}
    in the interval $\theta\in[\theta_1,\theta_2]$. 
    Integrating the latter inequality in the interval $[\theta_1,\theta_2]$ and exploiting $\sin(\theta)=(1+\omega(\eps))\theta$, $\cos(\theta)=1+\omega(\eps)$, we get
    \begin{equation*}
        (1+\omega(\eps))\dot V(\theta_2)\theta_2^{n-2}\le
        (1+\omega(\eps))\theta_2^{n-1}+3\int_0^{\theta_2}H^{-}\sin(\theta)^{n-2}\de\theta
    \end{equation*}
    and \cref{eq:estimate_poles} follows for $\theta=\theta_2$ (notice that $\int_{\partial K}H^- = (1+\omega(\eps))\int_{\S^{n-1}}H^-$). 
    By exploiting the symmetry $(x_1,x_2,\dots, x_n)\mapsto(-x_1,x_2,\dots,x_n)$, we have also
    \begin{equation*}\label{eq:estimate_pole_south}
        -\dot V(\theta) \le (1+\omega(\eps))(\pi-\theta) + \frac{C_0}{(\pi-\theta)^{n-2}}\int_{\partial K}H^-
    \end{equation*}
    for each $\pi-\theta_0\le \theta\le\pi$.
    
    It is elementary to check that, for any $\alpha>1$ (in the end, we will choose a value of $\alpha$ very close to $1$) there is a constant $C(\alpha)$ so that if $x,y,z\ge 0$ satisfy $x\le y+z$, then $x^3\le \alpha x^2y + C(\alpha)z^3$.
    To simplify the notation, let $Q\defeq C_0\int_{\partial K}H^-$
    Therefore, \cref{eq:estimate_poles} implies
    \begin{equation*}
        \dot V^3 \le \dot V^2(\alpha+\omega(\eps)) \theta 
        +
        C(\alpha)\min\Big\{\frac{Q}{\theta^{n-2}}, \eps\Big\}^3\,,
    \end{equation*}
    where we have used $\abs{\dot V}\le \eps$ (which holds because the domain $K$ is a $C^1$ $\eps$-perturbation of the ball).
    So, the first term in the right-hand side of \cref{eq:tmp4216} can be estimated as follows
    \begin{equation}\label{eq:tmp2411}\begin{aligned}
        \int_0^{\theta_0}(1+\omega(\eps))\dot V^3\sin(\theta)^{n-3}\de\theta 
        \le (\alpha+\omega(\eps))\int_0^{\theta_0} \dot V^2 \sin(\theta)^{n-2}
        +
        C(\alpha)\int_0^{\infty}\min\Big\{\frac{Q}{\theta^{n-2}},\eps\Big\}^3\theta^{n-3}\de\theta \,.
    \end{aligned}\end{equation} 
    Let us define the two neighborhoods of the north and south pole respectively
    \begin{align*}
        &P_{\theta_0}^+ \defeq \{x\in\S^{n-1}:\ x_1\ge\cos(\theta_0)\} \,, \\
        &P_{\theta_0}^- \defeq \{x\in\S^{n-1}:\ x_1\le-\cos(\theta_0)=\cos(\pi-\theta_0)\} \,.
    \end{align*}
    Thanks to the usual change of variable, the first term is equal to 
    \begin{equation}\label{eq:tmp2412}
        \int_0^{\theta_0} \dot V^2 \sin(\theta)^{n-2} 
        =
        \Haus^{n-2}(\S^{n-2})^{-1}\int_{P^+_{\theta_0}}\abs{\nabla u}^2 \,,
    \end{equation}
    while, for the second term, we have
    \begin{equation}\label{eq:tmp2413}
        \int_0^{\infty}\min\Big\{\frac{Q}{\theta^{n-2}},\eps\Big\}^3\theta^{n-3}\de\theta
        =
        \frac1{n-2}\Big(Q\eps^2 + Q^3\frac{\eps^2}{2Q^2}\Big) = \omega(\eps)Q
    \end{equation}
    Joining \cref{eq:tmp2411,eq:tmp2412,eq:tmp2413}, we finally obtain
    \begin{equation*}\begin{aligned}
         \int_0^{\theta_0}(1+\omega(\eps))\dot V^3\sin(\theta)^{n-3}\de\theta 
        \le (\alpha+\omega(\eps))\Haus^{n-2}(\S^{n-2})^{-1}\int_{P^+_{\theta_0}}\abs{\nabla u}^2
        +\omega(\eps)C(\alpha)\int_{\partial K}H^- \,.
    \end{aligned}\end{equation*}
    If we repeat the same argument for the range of angles $[\pi-\theta_0,\pi]$, we obtain the estimate
    \begin{equation*}\begin{aligned}
        -\int_0^{\theta_0}(1+\omega(\eps))\dot V^3\sin(\theta)^{n-3}\de\theta 
        \le (\alpha+\omega(\eps))\Haus^{n-2}(\S^{n-2})^{-1}\int_{P^-_{\theta_0}}\abs{\nabla u}^2
        +\omega(\eps)C(\alpha)\int_{\partial K}H^- \,.
    \end{aligned}\end{equation*}
    Hence, by applying the latter two estimates together with \cref{eq:tmp942,eq:tmp943,eq:tmp4216}, we get
    \begin{equation}\label{eq:tmp145}\begin{aligned}
        \int_{\S^{n-1}} (1+u)^{n-3}\frac{\nabla^2u[\nabla u, \nabla u]}{(1+u)^2 + \abs{\nabla u}^2}
        \ge &
        -\frac{n-2}3(\alpha+\omega(\eps))\int_{P_{\theta_0}^+\cup P_{\theta_0}^-} \abs{\nabla u}^2
        +\omega(\eps)C(\alpha)\int_{\partial K}H^- 
        \\
        &+ \omega(\eps)\int_{\S^{n-1}}\abs{\nabla u}^2 \,.
    \end{aligned}\end{equation}
    
    Let us fix $\lambda=\lambda(n)>0$ (any sufficiently large $\lambda$ works). 
    Let us decompose the function as $u=u_1+u_2$, where $u_1,u_2$ belong to the subspaces generated by the eigenfunctions of $-\lapl$ with eigenvalues respectively smaller and larger than $\lambda$.
    
    For any constant $C>0$, we have the elementary inequality $\abs{x+y}^2 \le (1+C)\abs{x}^2 + (1+C^{-1})\abs{y}^2$. Thus, since $\nabla u = \nabla u_1 + \nabla u_2$, it holds that
    \begin{equation}\label{eq:tmp822}
        \int_{P_{\theta_0}^+\cup P_{\theta_0}^-} \abs{\nabla u}^2 \le (1+C)\int_{P_{\theta_0}^+\cup P_{\theta_0}^-}\abs{\nabla u_1}^2
        +
        (1+C^{-1})\int_{P_{\theta_0}^+\cup P_{\theta_0}^-}\abs{\nabla u_2}^2 \,.
    \end{equation}
    Since $u_1$ belongs to a finite-dimensional subspace (generated by the eigenfunctions of $-\lapl$ with eigenvalues smaller than $\lambda$) we know that all norms of $u_1$ are equivalent and therefore
    \begin{equation}\label{eq:tmp823}
        \int_{P_{\theta_0}^+\cup P_{\theta_0}^-}\abs{\nabla u_1}^2 \le \Haus^{n-1}(P_{\theta_0}^+\cup P_{\theta_0}^-)\norm{\nabla u_1}_{\infty}^2 \lesssim \omega(\theta_0)\int_{\S^{n-1}}\abs{\nabla u_1}^2 
        \,.
    \end{equation}
    Choosing $C$ appropriately depending on $\theta_0$, it follows from \cref{eq:tmp822,eq:tmp823} (notice that $\omega(\theta_0)=\omega(\eps)$)
    \begin{equation}\label{eq:poles_estimate}
        \int_{P_{\theta_0}^+\cup P_{\theta_0}^-} \abs{\nabla u}^2 \le \int_{\S^{n-1}}\abs{\nabla u_2}^2
        + \omega(\eps)\int_{\S^{n-1}}\abs{\nabla u}^2 \,.
    \end{equation}
    
    Applying \cref{eq:poles_estimate}, \cref{eq:tmp145} becomes
    \begin{equation*}\begin{aligned}
        \int_{\S^{n-1}} (1+u)^{n-3}\frac{\nabla^2u[\nabla u, \nabla u]}{(1+u)^2 + \abs{\nabla u}^2}
        \ge &
        -\frac{n-2}3(\alpha+\omega(\eps))\int_{\S^{n-1}} \abs{\nabla u_2}^2
        +\omega(\eps)C(\alpha)\int_{\partial K}H^- 
        \\
        &+ \omega(\eps)\int_{\S^{n-1}}\abs{\nabla u}^2 \,.
    \end{aligned}\end{equation*}
    The latter estimate, coupled with \cref{eq:tmp7531}, implies
    \begin{equation*}\begin{aligned}
        \int_{\partial K} H^+ - (n-1)\Haus^{n-1}(\S^{n-1})
        &=
        \int_{\partial K} H - (n-1)\Haus^{n-1}(\S^{n-1}) + \int_{\partial K} H^- \\
        &\ge
        \frac{n-2}2\int_{\S^{n-1}}\abs{\nabla u}^2-\frac{(n-1)(n-2)}2\int_{\S^{n-1}} u^2 \\
        &-\frac{n-2}3(\alpha+\omega(\eps))\int_{\S^{n-1}}\abs{\nabla u_2}^2 +\omega(\eps)\int_{\S^{n-1}} u^2 + \abs{\nabla u}^2 \\
        &+ (1-\omega(\eps)C(\alpha))\int_{\partial K}H^-\,.
    \end{aligned}\end{equation*}
    Let us choose $\alpha\defeq \frac54$ so that $\frac{n-2}2-\frac{n-2}3(\alpha+\omega(\eps))>c_0 \defeq 1/ 13$. Moreover, since $\alpha$ is fixed, we have (for $\eps$ sufficiently small) $1-\omega(\eps)C(\alpha)>0$. Therefore we have
    \begin{align*}
        \int_{\partial K} H^+ - (n-1)\Haus^{n-1}(\S^{n-1}) 
        \ge
        &\Big(\frac{n-2}2+\omega(\eps)\Big)\int_{\S^{n-1}}\abs{\nabla u_1}^2 - \Big(\frac{(n-1)(n-2)}2+\omega(\eps)\Big)\int_{\S^{n-1}}u_1^2 \\
        &+c_0\int_{\S^{n-1}}\abs{\nabla u_2}^2 - \Big(\frac{(n-1)(n-2)}2+\omega(\eps)\Big)\int_{\S^{n-1}}u_2^2 \,.
    \end{align*}
    From here we can conclude exactly as in the proof of \cref{thm:main_statement_nuclear}.
\end{proof}

\begin{proof}[Proof of \cref{thm:main_statement_stability}]
    Let $K$ be a $C^1$ $\eps_3$-perturbation of a ball with $\eps_3>0$. We show that the statement holds for $K$ if $\eps_3$ is sufficiently small.
    
    Without loss of generality we may assume that $\abs{K}=\abs{B_1}$, the barycenter of $K$ is the origin and it holds
    \begin{equation*}
        \abs*{\nu(x) - \frac{x}{\abs{x}}} = \omega(\eps_3)
    \end{equation*}
    for all $x\in\partial K$. In particular $K$ is star-shaped.
    
    Let $u:\S^{n-1}\to(-1,\infty)$ be the function such that $(1+u(x))x\in\partial K$ for all $x\in\S^{n-1}$.
    Thanks to \cref{lem:nablau_vs_normal}, we have that $\eps\defeq\norm{u}_{C^1} = \omega(\eps_3)$.
    
    The condition on the volume and the condition on the barycenter of $K$ imply (recall \cref{eq:volume_formula})
    \begin{align}
        \int_{\S^{n-1}}\frac1n = \int_{\S^{n-1}} \frac{(1+u)^n}n &\implies \label{eq:volume_constraint}
        \int_{\S^{n-1}} u = -\Big(\frac{n-1}2+\omega(\eps)\Big) \int_{\S^{n-1}} u^2 
        = \omega(\eps)\norm{u}_{L^2}\,,\\
        0 = \int_{\S^{n-1}} (1+u)^{n+1}x &\implies \abs*{\int_{\S^{n-1}} u\, x} \le \omega(\eps)\norm{u}_{L^2} \,. \label{eq:barycenter_constraint2}
    \end{align}
    As in the proof of \cref{thm:main_statement_nuclear} (there we used \cref{eq:perimeter_constraint,eq:barycenter_constraint}), from \cref{eq:volume_constraint,eq:barycenter_constraint2} one can deduce the Poincaré inequality \cref{eq:poincare_low}.
    
    Plugging \cref{eq:volume_constraint} into \cref{eq:integral_H_formula} yields
    \begin{equation}\label{eq:tmp3462}\begin{aligned}
        \int_{\partial K} H - (n-1)\Haus^{n-1}(\S^{n-1})
        &=
        (n-2)\int_{\S^{n-1}}\abs{\nabla u}^2-(n-1)(n-2)\int_{\S^{n-1}} u^2 \\
        &+ \int_{\S^{n-1}} (1+u)^{n-3}\frac{\nabla^2u[\nabla u, \nabla u]}{(1+u)^2 + \abs{\nabla u}^2}+\omega(\eps)\int_{\S^{n-1}} u^2 + \abs{\nabla u}^2 \,.
    \end{aligned}\end{equation}
    
    Let $H^-\defeq -\min(H, 0)$ be the negative part of the mean curvature, thanks to \cref{eq:mean_curvature_divergence}, we have
    \begin{equation*}
        \div\bigg(\frac{\nabla u}{\sqrt{1+\abs{v}^2}}\bigg) 
        \le n-1+\omega(\eps) + 2 H^- \,.
    \end{equation*}
    Applying \cref{lem:freq_decomposition} with $\lambda=\lambda(n)$ sufficiently large, with $f(s,t) = \frac{(1+s)^{n-3}}{(1+s)^2+t}$ and $g(s,t) = (1+\frac{t}{(1+s)^2})^{-\frac12}$, we obtain
    \begin{equation}\label{eq:tmp13}\begin{aligned}
        \int_{\S^{n-1}} (1+u)^{n-3}\frac{\nabla^2u[\nabla u, \nabla u]}{(1+u)^2 + \abs{\nabla u}^2} 
        &\ge 
        -\frac12\int_{\S^{n-1}}(n-1+\omega(\eps)+2H^-)\abs{\nabla u_2}^2 \\
        &\hspace{12pt}
        + \omega(\eps)\int_{\S^{n-1}}(n-1+\omega(\eps) + 2H^-)\abs{\nabla u}^2 \\
        &
        =-\frac{n-1}2\int_{\S^{n-1}}\abs{\nabla u_2}^2 + \omega(\eps)\int_{\S^{n-1}}\abs{\nabla u}^2 + \omega(\eps)\int_{\S^{n-1}}H^-
    \end{aligned}\end{equation}
    and therefore we have
    \begin{align*}
        \int_{\S^{n-1}} (1+u)^{n-3}\frac{\nabla^2u[\nabla u, \nabla u]}{(1+u)^2 + \abs{\nabla u}^2} + \frac12H^-
        \ge -\frac{n-1}2\int_{\S^{n-1}}\abs{\nabla u_2}^2
        + \omega(\eps)\int_{\S^{n-1}} \abs{\nabla u}^2 \,.
    \end{align*}
    Joining the latter inequality with \cref{eq:tmp3462}, we obtain\footnote{We are implicitly using $\frac12\int_{\S^{n-1}}H^- \le \int_{\partial K} H^-$, which is true for $\eps$ sufficiently small (since the Jacobian \cref{eq:jacobian} is $1+\omega(\eps)$).}
    \begin{equation*}\begin{aligned}
        \int_{\partial K} H^+ - (n-1)\Haus^{n-1}(\S^{n-1})
        &\ge
        (n-2)\int_{\S^{n-1}}\abs{\nabla u_1}^2 
        + \frac{n-3}2\int_{\S^{n-1}}\abs{\nabla u_2}^2 \\
        &\hspace{12pt}-(n-1)(n-2)\int_{\S^{n-1}} u_1^2 + u_2^2
        +\omega(\eps)\int_{\S^{n-1}} u^2 + \abs{\nabla u}^2 \,.
    \end{aligned}\end{equation*}
    Notice that since $n\ge 4$, the coefficient in front of $\int\abs{\nabla u_2}^2$ is strictly positive. Using that $u_1$ satisfies the Poincaré inequality \cref{eq:poincare_low_u1} and $u_2$ satisfies the Poincaré inequality \cref{eq:poincare_high}, from the latter estimate we deduce
    \begin{equation*}
        \int_{\partial K} H^+\de\Haus^{n-1} - (n-1)\Haus^{n-1}(\S^{n-1})
        \gtrsim \int_{\S^{n-1}} \abs{\nabla u}^2 \,,
    \end{equation*}
    which implies (given that $\abs{K}=\abs{B_1}$)
    \begin{equation*}
        \frac{\Big(\int_{\partial K} H^+\de\Haus^{n-1}\Big)^{\frac1{n-2}}}{\abs{K}^{\frac1n}} - \frac{\Big((n-1)\Haus^{n-1}(\S^{n-1})\Big)^{\frac1{n-2}}}{\abs{B_1}^{\frac1n}}
        \gtrsim  \int_{\S^{n-1}} \abs{\nabla u}^2 \,.
    \end{equation*}
    Using \cref{lem:nablau_vs_normal} once again, we have
    \begin{equation*}
        \int_{\S^{n-1}} \abs{\nabla u}^2 \gtrsim \fint_{\partial K} \abs*{\nu(x) - \frac{x}{\abs{x}}}^2 \de\Haus^{n-1}(x) \,.
    \end{equation*}
    The last two estimates imply the desired statement.
\end{proof}

\begin{proof}[Proof of \cref{thm:main_statement_almost_sharp}]
    We repeat the proof of \cref{thm:main_statement_nuclear} verbatim up to \cref{eq:tmp7531}.
    As a consequence of \cref{eq:tmp7531}, we have
    \begin{equation*}
        \int_{\partial K} H - (n-1)\Haus^{n-1}(\S^{n-1}) = \omega(\eps) 
        + \int_{\S^{n-1}} (1+u)^{n-3}\frac{\nabla^2 u[\nabla u, \nabla u]}{(1+u)^2 + \abs{\nabla u}^2} \,.
    \end{equation*}
    As a consequence of \cref{eq:tmp13} (whose proof can be repeated verbatim), we also have (recall that $\omega(\eps)$ can be a negative quantity)
    \begin{equation*}
        \int_{\S^{n-1}} (1+u)^{n-3}\frac{\nabla^2 u[\nabla u, \nabla u]}{(1+u)^2 + \abs{\nabla u}^2} \ge \omega(\eps) + \omega(\eps)\int_{\partial K} H^- \,.
    \end{equation*}
    Therefore, we obtain
    \begin{align*}
        \int_{\partial K} H^+ - (n-1)\Haus^{n-1}(\S^{n-1}) &= 
        \Big(\int_{\partial K} H - (n-1)\Haus^{n-1}(\S^{n-1})\Big) + \int_{\partial K} H^- 
        \\
        &\ge \omega(\eps) + \omega(\eps)\int_{\partial K} H^- + \int_{\partial K} H^-
        \ge \omega(\eps) \,,
    \end{align*}
    which implies the desired statement (since we normalized $\Per(K)=\Per(B_1)$ and $\abs{\omega(\eps)}$ can be assumed to be much smaller than the constant $\delta$ appearing in the statement).
\end{proof}

\section{A conjecture related to the missing statement}\label{sec:open_conjecture}
In this work we have not been able to establish the inequality
\begin{equation}\label{eq:missing_ineq}
    \int_{\partial K} H^+\de\Haus^{n-1} \ge (n-1)\Haus^{n-1}(\S^{n-1})
\end{equation}
for a $C^1$ $\eps$-perturbation $K\subseteq\R^n$ of a ball with $\Per(K)=\Per(B_1)$. 
We expect this inequality to be true, first and foremost because it is true for outward-minimizing sets (as observed in the introduction).
In this section we want to isolate the main obstruction that prevents us from proving \cref{eq:missing_ineq} with our strategy. Our reasoning will not be entirely formal, but it shall be clear why the terms we ignore do not play an important role.

Let us consider \cref{eq:missing_ineq} under the additional assumption\footnote{Under this assumption, \cref{eq:missing_ineq} is true for $C^1$ $\eps$-perturbations of the ball as shown in \cite{GuanLi2009}.} $H\ge 0$. Hence we would like to show 
\begin{equation}\label{eq:conj_sect_starting}
    \int_{\partial K} H\de\Haus^{n-1} \ge (n-1)\Haus^{n-1}(\S^{n-1})
\end{equation}
under the constraint $H\ge 0$.
Let $u:\S^{n-1}\to(-1,\infty)$ be the function such that $(1+u(x))x\in\partial K$ for all $x\in\S^{n-1}$.
Reasoning as in the proof of \cref{thm:main_statement_nuclear}, thanks to \cref{eq:tmp7531,eq:mean_curvature_divergence}, we have that the inequality \cref{eq:conj_sect_starting} is equivalent to
\begin{equation*}
    \frac{n-2}2\int\limits_{\S^{n-1}}\abs{\nabla u}^2
    -\frac{(n-1)(n-2)}2\int\limits_{\S^{n-1}} u^2
    + \int\limits_{\S^{n-1}} (1+u)^{n-3}\frac{\nabla^2u[\nabla u, \nabla u]}{(1+u)^2 
    + \abs{\nabla u}^2}
    + \omega(\eps)\int\limits_{\S^{n-1}} u^2 + \abs{\nabla u}^2  \ge 0
\end{equation*}
under the constraint
\begin{equation*}
    \frac{n-1 + \abs{v}^2}{(1+u)\sqrt{1+\abs{v}^2}} \ge (1+u)^{-2}\div\bigg(\frac{\nabla u}{\sqrt{1+\abs{v}^2}}\bigg) \,.
\end{equation*}
Let us make some considerations:
\begin{enumerate}
    \item The term $\int_{\S^{n-1}}u^2$ is much smaller than $\int_{\S^{n-1}}\abs{\nabla u}^2$ due to the Poincaré inequality (recall that in our proofs only the high-frequency part of $u$ plays a role).
    \item The term $\omega(\eps)\int_{\S^{n-1}} u^2 + \abs{\nabla u}^2$ should be negligible due to the $\omega(\eps)$ coefficient in front.
    \item The term $(1+u)^{n-3}\frac{\nabla^2u[\nabla u,\nabla u]}{(1+u)^2+\abs{\nabla u}^2}$ can be treated like $\nabla^2 u[\nabla u,\nabla u]$.
    \item It holds $\frac{n-1 + \abs{v}^2}{(1+u)\sqrt{1+\abs{v}^2}} \approx n-1$.
    \item The term $(1+u)^{-2}\div\bigg(\frac{\nabla u}{\sqrt{1+\abs{v}^2}}\bigg)$ can be treated like $\lapl u$.
\end{enumerate}
With these considerations in mind, we see that our problem is \emph{morally equivalent} to showing
\begin{equation*}
    \frac{n-2}{2}\int_{\S^{n-1}}\abs{\nabla u}^2 + \int_{\S^{n-1}}\nabla^2 u[\nabla u,\nabla u] \ge 0
\end{equation*}
under the constraint
\begin{equation*}
    n-1 \ge \lapl u \,.
\end{equation*}
Integrating by parts, it is the same as proving
\begin{equation*}
    \int_{\S^{n-1}}\lapl u\abs{\nabla u}^2 \le (n-2)\int_{\S^{n-1}}\abs{\nabla u}^2 
    \quad\text{provided that}\quad \lapl u\le n-1 \,.
\end{equation*}
We have finally reached the obstruction: even if we assume that $\norm{\nabla u}_{\infty}\ll 1$ and we assume that $u$ has only high frequencies (as we have done in our proofs) this latter inequality remains elusive\footnote{Notice that \cref{lem:int_parts_interpol} establish such an inequality (with an even better constant) if $\lapl u\le n-1$ is replaced by the stronger assumption $\nabla^2 u\le \id$.}.
Let us state this as a conjecture. We believe that the methods necessary to prove such an inequality would be sufficient to prove \cref{eq:missing_ineq}.
\begin{conjecture}
    For $n\ge 2$, let $u:\S^{n-1}\to\R$ be a smooth function with $\lapl u\le 1$ (and, if necessary, also $\norm{\nabla u}_{\infty}\ll 1$). Then it holds
    \begin{equation*}
        \int_{\S^{n-1}} \lapl u\abs{\nabla u}^2 \le \frac{n-2}{n-1}\int_{\S^{n-1}}\abs{\nabla u}^2 \,.
    \end{equation*}
\end{conjecture}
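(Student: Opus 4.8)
The plan is to run the strategy of \cref{sec:main_proofs} in this simplified setting. Integrating by parts once, the conjectured inequality is equivalent to
\begin{equation*}
    \int_{\S^{n-1}} \nabla^2 u[\nabla u,\nabla u] \ge -\frac{n-2}{2(n-1)}\int_{\S^{n-1}}\abs{\nabla u}^2\,,
\end{equation*}
and nothing changes if we add a constant to $u$. The first step is to split $u=u_1+u_2$ into its low- and high-frequency components, i.e.\ the projections of $u$ onto the eigenspaces of $-\lapl$ with eigenvalue smaller and larger than a fixed $\lambda=\lambda(n)$, exactly as in \cref{lem:freq_decomposition}. Since $u_1$ lives in a fixed finite-dimensional space we have $\norm{\nabla^2 u_1}=\omega(\eps)$, so every term containing $u_1$ has size $\omega(\eps)\int\abs{\nabla u}^2$ and the problem reduces to the very same inequality for the high-frequency part $u_2$, the resulting errors being eventually absorbed by the strong Poincaré inequality $\int\abs{\nabla u_2}^2\ge\lambda\int u_2^2$.

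The heart of the matter is then to squeeze the factor $\tfrac1{n-1}$ out of the one-sided scalar bound $\lapl u\le 1$. Writing $w\defeq 1-\lapl u\ge 0$ (so that $\fint_{\S^{n-1}}w=1$, and $u$ equals $(-\lapl)^{-1}(w-1)$ up to an additive constant), the conjecture is equivalent to the weighted coercivity estimate
\begin{equation*}
    \int_{\S^{n-1}} w\,\abs{\nabla u}^2 \ge \frac1{n-1}\int_{\S^{n-1}}\abs{\nabla u}^2 \qquad\text{for } w=1-\lapl u\ge 0\,.
\end{equation*}
The brutal bound $w\ge 0$ gives only $\int\lapl u\,\abs{\nabla u}^2\le\int\abs{\nabla u}^2$, i.e.\ the inequality with the useless constant $1$ (equivalently $\int\nabla^2 u[\nabla u,\nabla u]\ge-\tfrac12\int\abs{\nabla u}^2$, the obstruction already met in the introduction). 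To do better the natural tool is a weighted Bochner identity on $\S^{n-1}$: multiplying $\tfrac12\lapl\abs{\nabla u}^2=\abs{\nabla^2 u}^2+\nabla u\cdot\nabla\lapl u+(n-2)\abs{\nabla u}^2$ by $u$ and integrating gives the exact identity
\begin{equation*}
    \int_{\S^{n-1}}\lapl u\,\abs{\nabla u}^2 = \frac23\int_{\S^{n-1}} u\big(\abs{\nabla^2 u}^2-(\lapl u)^2\big) + \frac{2(n-2)}{3}\int_{\S^{n-1}} u\,\abs{\nabla u}^2\,,
\end{equation*}
and one would hope the constraint $\lapl u\le 1$, together with the high-frequency reduction, to force its right-hand side below $\tfrac{n-2}{n-1}\int\abs{\nabla u}^2$.

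The hard part — and the reason this is only a conjecture — is that none of these routes closes. In the Bochner identity both $\int u\,\abs{\nabla^2 u}^2$ and $\int u\,(\lapl u)^2$ are genuinely of the same cubic order as the left-hand side, and since $u$ is sign-changing while $\lapl u\le 1$ controls only $(\lapl u)^+$ — and says nothing about $(\lapl u)^-$, nor about the traceless part of $\nabla^2 u$ — these terms are not controlled by $\int\abs{\nabla u}^2$; likewise, in the coercivity reformulation the mass of $w$ may concentrate precisely where $\nabla u$ is small. The single case in which the argument does succeed is axial symmetry: for $u=V(\theta)$ the cubic term becomes $\tfrac{2(n-2)}3\Haus^{n-2}(\S^{n-2})\int_0^\pi\dot V^3\cos(\theta)\sin(\theta)^{n-3}\de\theta$, and the bound $\lapl u\le 1$ turns into a sign condition on $\tfrac{\de}{\de\theta}\big(\dot V\sin(\theta)^{n-2}\big)$ near the two poles, which — exactly as in the proof of \cref{thm:main_statement_axial} — is what makes the estimate close there. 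Thus the obstacle is the non-symmetric counterpart of that pole analysis: bounding $\int\nabla^2 u[\nabla u,\nabla u]$ from below using only a bound on the trace $\lapl u$ rather than on the full Hessian $\nabla^2 u$ — precisely the weakening of the hypothesis under which \cref{lem:int_parts_interpol} does go through. Reducing the general case to the axially symmetric one by a foliated Schwarz symmetrization would be the natural workaround, but we do not see how to make such a rearrangement respect the cubic functional and the constraint at the same time.
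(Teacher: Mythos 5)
You correctly recognize that this statement is a \emph{conjecture} in the paper, not a theorem: the paper offers no proof and explicitly says it cannot prove the inequality even with the constant $\tfrac{n-2}{n-1}$ replaced by any constant strictly below $1$. Your scoping analysis is sound where it overlaps with the paper's remark: the integration-by-parts reduction to $\int_{\S^{n-1}}\nabla^2 u[\nabla u,\nabla u]\ge-\tfrac{n-2}{2(n-1)}\int_{\S^{n-1}}\abs{\nabla u}^2$, the reformulation as a weighted coercivity estimate for $w\defeq 1-\lapl u\ge 0$, and the observation that the crude bound $w\ge 0$ recovers only the useless constant $1$ all match the obstruction the paper isolates (namely that \cref{lem:int_parts_interpol} exploits the full Hessian bound $\nabla^2 u\le\id$, whereas here only the trace is controlled). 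Your weighted Bochner identity $\int_{\S^{n-1}}\lapl u\abs{\nabla u}^2 = \tfrac23\int_{\S^{n-1}} u\big(\abs{\nabla^2 u}^2-(\lapl u)^2\big) + \tfrac{2(n-2)}{3}\int_{\S^{n-1}} u\abs{\nabla u}^2$ is correctly derived (using $\mathrm{Ric}_{\S^{n-1}}=(n-2)\,\mathrm g$) and is a reformulation the paper does not state.

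One claim should be qualified. You assert that the axially symmetric case of the conjecture ``does succeed'' via the pole analysis of \cref{thm:main_statement_axial}, but the paper proves \cref{thm:main_statement_axial} (a geometric Minkowski-type inequality), not the axially symmetric conjecture, and the constants do not transfer: that pole analysis, with the paper's choice $\alpha=\tfrac54$, translates into a bound of the form $\int_{\S^{n-1}}\lapl u\abs{\nabla u}^2\le\tfrac{2(n-2)}3\alpha\int_{\S^{n-1}}\abs{\nabla u_2}^2+\cdots$, and $\tfrac{2(n-2)}3\alpha=\tfrac{5(n-2)}{6}$ exceeds $\tfrac{n-2}{n-1}$ for every $n\ge 3$ and is $\ge 1$ once $n\ge 4$, so it is strictly weaker than what the conjecture asks for even in the symmetric setting. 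You should also record, as the paper does, the trivial $n=2$ case: there $\tfrac{n-2}{n-1}=0$ and $\lapl u\abs{\nabla u}^2=\tfrac{\de}{\de\theta}\tfrac{\abs{\nabla u}^3}{3}$ is a pure derivative on $\S^1$, so the left-hand side vanishes. Overall your proposal reaches the correct conclusion -- the conjecture is open -- and it is an honest diagnosis rather than a proof, which is exactly what the statement warrants.
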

\begin{remark}
    If the constant $\frac{n-2}{n-1}$ is replaced by $1$, the conjecture becomes trivial.
    We do not believe that the constant $\frac{n-2}{n-1}$ is the optimal one (but it is the constant that comes up naturally in our work). When $n=2$, the conjecture follows from the elementary identity $\lapl u\abs{\nabla u}^2=\frac{\de(\abs{\nabla u}^3)}3$ (valid only if the domain is $1$-dimensional).
    
    We are not able to prove the conjecture even if $\frac{n-2}{n-1}$ is replaced by any constant strictly less than $1$. If we were able to prove the conjecture with a constant strictly below $1$, then, very likely, we would be able to show \cref{thm:main_statement_stability} also for $n=3$.
    
    For $n\ge 4$, one can find a sequence of functions $(u_k)_{k\in\N}$ such that $\lapl u_k\le 1$, $\norm{\nabla u_k}_{\infty}\to 0$ as $k\to\infty$, and
    \begin{equation*}
        \liminf_{k\to\infty}\frac{\int_{\S^{n-1}}\lapl u_k\abs{\nabla u_k}^2}{\int_{\S^{n-1}}\abs{\nabla u_k}^2} > 0 \,.
    \end{equation*}
    Hence, even under the constraint $\norm{\nabla u_k}_{\infty}\ll 1$, the desired inequality is \emph{dimensionally} sharp. Such a sequence of $u_k$ can be constructed by appropriately convolving the Green function on the sphere.
\end{remark}

\appendix
\section{Necessity of curvature bounds}\label{app:counterexample}
One might be tempted to claim that \cref{eq:minkowski} or \cref{eq:minkowski_vol} hold (possibly with a non-sharp constant) in a $C^1$ neighborhood of the ball without any curvature assumption and without replacing $H$ with $H^+$. 
This is not true (for $n\ge 3$) and, since the construction of the counterexample is not as straight-forward as one might expect, we prove it.

First we need a technical lemma.

\begin{lemma}\label{lem:radial_int_parts}
    Given $n\ge 3$, let $u:\R^{n-1}\to\R$ be a smooth radial function with compact support, so $u(x)=f(\abs{x})$ where $f:[0,\infty)\to\R$ is a smooth compactly supported function with\footnote{We denote the derivatives of $f$ with $\dot f, \ddot f$.} $\dot f(0)=0$.
    Let $0\in U\subseteq\R$ be an open set and let $a:U\times U\to\R$ be a $C^1$-function such that $a(0, 0) = 1$. The function $a$ is considered fixed, hence any constant that depends on $a$ can be hidden in $\omega(\eps)$ and in the big-$\bigo$ notation.
    
    If $\norm{u}_{C^1}\le \eps$, then it holds
    \begin{equation*}
        \int_{\R^{n-1}} a(u, \abs{\nabla u}^2) \nabla^2 u[\nabla u,\nabla u]
        =
        -\frac{(n-2)\Haus^{n-2}(\S^{n-2})}3\int_0^{\infty} \big(1+\omega(\eps)\big) \dot f^3 r^{n-3} \de r + \bigo\Big(\int_0^{\infty} \dot f^2 r^{n-2}\de r\Big) \,.
    \end{equation*}
\end{lemma}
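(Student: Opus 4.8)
The plan is to reduce the $(n-1)$-dimensional integral to a one-dimensional one via polar coordinates, absorb the coefficient $a$ into a suitable antiderivative, and then integrate by parts in the radial variable — exactly the mechanism used in the proof of \cref{thm:main_statement_axial} (formula \cref{eq:tmp943}), but carried out on $\R^{n-1}$ rather than on $\S^{n-1}$.

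\textbf{Step 1 (radial reduction).} For $u(x)=f(\abs{x})$ one has $\nabla u(x)=\dot f(r)\,\tfrac{x}{r}$ with $r=\abs{x}$, hence $\abs{\nabla u}^2=\dot f(r)^2$; moreover, since $\nabla u$ points in the radial direction, $\nabla^2 u[\nabla u,\nabla u]=\abs{\nabla u}^2\,\nabla^2 u\big[\tfrac{x}{r},\tfrac{x}{r}\big]=\dot f(r)^2\,\ddot f(r)$ (the $(n-2)$ tangential eigenvalues $\dot f/r$ of $\nabla^2 u$ do not contribute). Writing the integral in polar coordinates,
\[
\int_{\R^{n-1}} a(u,\abs{\nabla u}^2)\,\nabla^2 u[\nabla u,\nabla u]
= \Haus^{n-2}(\S^{n-2})\int_0^\infty a(f,\dot f^2)\,\dot f^2\,\ddot f\, r^{n-2}\,\de r,
\]
and all the integrals appearing in the statement converge, since $f$ has compact support and $n\ge 3$.

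\textbf{Step 2 (antiderivative and key identity).} Set $B(s,p)\defeq\int_0^p a(s,q^2)\,q^2\,\de q$, so that $\partial_p B(s,p)=a(s,p^2)\,p^2$. Two elementary facts will be used: since $a(0,0)=1$ and $a\in C^1$, for $\eps$ sufficiently small one has $B(f,\dot f)=\tfrac13\dot f^3(1+\omega(\eps))$ pointwise in $r$; and $\abs{\partial_s B(s,p)}\lesssim\abs{p}^3$, because $\partial_s a$ is bounded on the compact set $[-\eps,\eps]^2\subseteq U\times U$. By the chain rule,
\[
a(f,\dot f^2)\,\dot f^2\,\ddot f=\frac{\de}{\de r}B(f,\dot f)-\partial_s B(f,\dot f)\,\dot f .
\]

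\textbf{Step 3 (integration by parts and error control).} Integrating this identity against $r^{n-2}$ over $(0,\infty)$, the total-derivative term becomes $-(n-2)\int_0^\infty B(f,\dot f)\,r^{n-3}\,\de r$ after integration by parts: the boundary term at $r=\infty$ vanishes by compact support, and the one at $r=0$ vanishes because $\dot f(0)=0$ forces $B(f,\dot f)\to 0$ there while $r^{n-2}\to 0$ for $n\ge 3$. The remaining contribution is bounded by $\bigo\!\big(\int_0^\infty\abs{\dot f}^4 r^{n-2}\de r\big)\le\bigo\!\big(\eps^2\int_0^\infty\dot f^2 r^{n-2}\de r\big)$, which is absorbed into the error. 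Substituting $B(f,\dot f)=\tfrac13\dot f^3(1+\omega(\eps))$ into the remaining term and multiplying by $\Haus^{n-2}(\S^{n-2})$ yields precisely the claimed formula. The computation presents no genuine obstacle; the only point needing care is the vanishing of the boundary term at the origin, which is exactly why the hypotheses $\dot f(0)=0$ and $n\ge 3$ are imposed.
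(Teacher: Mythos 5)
Your argument is correct and is essentially identical to the paper's: the auxiliary function $B(s,p)=\int_0^p a(s,q^2)q^2\,\de q$ is precisely the paper's $b$, the chain-rule identity and the subsequent integration by parts in $r$ against $r^{n-2}$ coincide, and the error estimates match (you are slightly more precise in tracking $\partial_s B=\bigo(\abs{p}^3)$ and in spelling out why the boundary terms vanish). No meaningful deviation from the paper's proof.
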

\begin{proof}
    It holds $\abs{\nabla u} = \abs{\dot f}$ and $\nabla^2 u[\nabla u, \nabla u]=\ddot f\dot f^2$, thus we have
    \begin{equation}\label{eq:loc4431}
        \int_{\R^{n-1}} a(u, \abs{\nabla u}^2) \nabla^2 u[\nabla u,\nabla u]
        =
        \Haus^{n-2}(\S^{n-2})\int_0^\infty a(f, \dot f^2) \ddot f\dot f^2 r^{n-2} \de r
        \,.
    \end{equation}
    Let $b:U\times U\to\R$ be the function such that $b(s, 0)=0$ and $\partial_t b(s, t) = t^2a(s, t^2)$. From its definition, it follows that $b$ satisfies
    \begin{equation}\label{eq:loc99482}
        b(s, t) = \frac{t^3}3\big(1+\bigo(s) + \bigo(t^2)\big) \,.
    \end{equation}
    Thanks to the identity
    \begin{equation*}
        \frac{\de}{\de r}(b(f, \dot f)) = a(f, \dot f^2)\dot f^2\ddot f + \bigo(\abs{\dot f}^2)\,,
    \end{equation*}
    the equality \cref{eq:loc4431} becomes
    \begin{equation*}
        \int_{\R^{n-1}} a(u, \abs{\nabla u}^2) \nabla^2 u[\nabla u,\nabla u]
        =
        \Haus^{n-2}(\S^{n-2})\int_0^\infty \frac{\de}{\de r}(b(f, \dot f)) r^{n-2} \de r
        +\bigo\Big(\int_0^{\infty} \dot f^2 r^{n-2}\de r\Big)
        \,,
    \end{equation*}
    which, recalling \cref{eq:loc99482}, implies the statement after an integration by parts.
\end{proof}

\begin{proposition}\label{prop:curv_is_necessary}
    Given $n\ge 3$, for any $\eps>0$, there is domain $K\subseteq\R^n$ which is a $C^1$ $\eps$-perturbation of a ball and such that
    \begin{equation*}
        \int_{\partial K} H < -1 \,.
    \end{equation*}
\end{proposition}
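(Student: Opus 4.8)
The plan is to dent the unit ball inward at a large number of well-separated points, each dent being so sharply localized that its only non-negligible effect on $\int_{\partial K}H$ is through the cubic term of \cref{eq:integral_H_formula}. Since a $C^1$ $\eps_0$-perturbation of a ball is a fortiori a $C^1$ $\eps$-perturbation for every $\eps\ge\eps_0$, we may assume $\eps$ is as small as we like. Fix once and for all a smooth radial profile $\psi\colon[0,\infty)\to(-\infty,0]$ supported in $[0,1]$, nondecreasing, with $\dot\psi(0)=0$ and $\psi\not\equiv0$; then $\psi(0)<0$, $\dot\psi\ge0$, and $\kappa\defeq\int_0^\infty\dot\psi^3 r^{n-3}\,\de r\in(0,\infty)$ (finiteness uses $n\ge3$). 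For a small $\rho>0$ (to be sent to $0$ at the end, with $\eps$ fixed) choose a maximal $3\rho$-separated set $p_1,\dots,p_N\in\S^{n-1}$, so that $N\approx\rho^{-(n-1)}$ and the geodesic balls $B_\rho(p_j)$ are pairwise disjoint, and define
\begin{equation*}
    u(x)\defeq\sum_{j=1}^N \mu\,\psi\big(d(x,p_j)/\rho\big)\,,\qquad \mu\defeq c(n)\,\eps\,\rho\,,
\end{equation*}
where $d$ is the geodesic distance on $\S^{n-1}$ and $c(n)>0$ is a small dimensional constant. Then $u$ is smooth, $\norm{u}_{C^1}\le\eps$, $\norm{u}_\infty\lesssim\eps\rho$, and by \cref{lem:nablau_vs_normal} the star-shaped domain $K$ with $(1+u(x))x\in\partial K$ is a $C^1$ $\eps$-perturbation of the ball, provided $c(n)$ was chosen small enough.

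We apply \cref{eq:integral_H_formula} to this $K$. A direct computation using the disjointness of the balls and $\mu=c(n)\eps\rho$ gives $\int_{\S^{n-1}}\abs{\nabla u}^2\lesssim\eps^2$ with a constant independent of $\rho$, while $\int_{\S^{n-1}}\abs{u}$ and $\int_{\S^{n-1}}u^2$ tend to $0$ as $\rho\to0$; hence the linear, quadratic and $\omega(\eps)$ terms of \cref{eq:integral_H_formula} together contribute $(n-1)\Haus^{n-1}(\S^{n-1})+\bigo(\eps^2)$, bounded uniformly in $\rho$. The whole point is the cubic term, which we write as $\int_{\S^{n-1}}a(u,\abs{\nabla u}^2)\,\nabla^2 u[\nabla u,\nabla u]$ with $a(s,t)\defeq(1+s)^{n-3}/((1+s)^2+t)$ (so $a(0,0)=1$), matching the integrand of \cref{lem:radial_int_parts}. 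Since the $B_\rho(p_j)$ are disjoint, this integral is the sum of the $N$ identical contributions of the single dents. On each $B_\rho(p_j)$ we pass to geodesic normal coordinates centred at $p_j$, in which the metric differs from the Euclidean one by relative errors $\bigo(\rho^2)$, so that the Riemannian gradient, Hessian and volume element agree with their Euclidean analogues up to relative errors $\bigo(\rho)$; then \cref{lem:radial_int_parts}, applied to $f(r)=\mu\,\psi(r/\rho)$, together with the scalings $\int_0^\infty\dot f^3 r^{n-3}\,\de r=(\mu/\rho)^3\rho^{n-2}\kappa$ and $\int_0^\infty\dot f^2 r^{n-2}\,\de r\lesssim(\mu/\rho)^2\rho^{n-1}$, yields a per-dent contribution $-c'(n)\,\eps^3\rho^{n-2}+\bigo(\eps^2\rho^{n-1})$ for a constant $c'(n)>0$. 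Summing the $N\approx\rho^{-(n-1)}$ dents,
\begin{equation*}
    \int_{\S^{n-1}}(1+u)^{n-3}\frac{\nabla^2 u[\nabla u,\nabla u]}{(1+u)^2+\abs{\nabla u}^2}\le -C(n)\,\eps^3\,\rho^{-1}+\bigo(\eps^2)\,,
\end{equation*}
with $C(n)>0$ and all implicit constants independent of $\rho$.

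Combining these estimates,
\begin{equation*}
    \int_{\partial K}H\le(n-1)\Haus^{n-1}(\S^{n-1})+\bigo(\eps^2)-C(n)\,\eps^3\,\rho^{-1}+o_{\rho\to0}(1)\,,
\end{equation*}
where, $\eps$ being fixed, the $\bigo(\eps^2)$ is a fixed number and $o_{\rho\to0}(1)\to0$ as $\rho\to0$. Hence, for $\eps$ fixed and small, and then $\rho$ small enough, the right-hand side is $<-1$, proving the proposition. The only genuinely delicate point is the chart-by-chart treatment of the cubic term: one must check that localizing to a geodesic ball of radius $\rho$ and replacing the spherical integrand by the flat one of \cref{lem:radial_int_parts} produces, per dent, only errors of size $\bigo(\eps^2\rho^{n-1})$ (from the $\bigo(\cdot)$ in \cref{lem:radial_int_parts}) and $\bigo(\rho\cdot\eps^3\rho^{n-2})$ (from the metric distortion), which after summing the $\approx\rho^{-(n-1)}$ dents amount to $\bigo(\eps^2)+o_{\rho\to0}(1)$, so that the divergent term $-C(n)\eps^3\rho^{-1}$ survives. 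Everything else is routine.
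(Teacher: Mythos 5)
Your proposal is correct and takes essentially the same approach as the paper: dent the ball inward at $\sim\rho^{-(n-1)}$ well-separated points using dents of radius $\rho$ and amplitude $\sim\eps\rho$, apply \cref{lem:radial_int_parts} per dent to extract the dominant negative cubic contribution $\sim-\eps^3\rho^{n-2}$, and sum and send $\rho\to0$ (the paper's $\kappa$ plays the role of your $\rho^{-1}$). Your treatment is in fact slightly more careful on the point the paper relegates to a footnote, namely transferring \cref{lem:radial_int_parts} from $\R^{n-1}$ to a small geodesic ball on the sphere via normal coordinates.
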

\begin{proof}
    Here we say that a function on the sphere is radial if it depends only on the distance from a point $\bar x\in\S^{n-1}$, in such case we say that $\bar x$ is the origin of the radial function.
    
    Given a large integer $\kappa \gg \eps^{-3}$, take $q=c\kappa^{n-1}$ points $(x_i)_{i=1,\dots, q}$ on $\S^{n-1}$ (with $c=c(n)$ universal) such that for any $i\not= j$ we have $\abs{x_i-x_j}\ge 2\kappa^{-1}$. 
    
    Consider a function $f:[0, \infty)\to\R$ supported in $[0,\kappa^{-1}]$ such that $-\eps/2\le f\le 0$, $0\le \dot f\le \frac\eps2$ and $\dot f=\frac\eps2$ in the interval $[\frac18\kappa^{-1},\frac78\kappa^{-1}]$. 
    
    For any $1\le i\le q$, let $u_i:\S^{n-1}\to\R$ be the radial function with profile given by $f$ and origin in $x_i$. The functions $(u_i)_{i=1,\dots,q}$ have disjoint supports and their supports are small balls of radius $\kappa^{-1}$. 
    Furthermore, notice that $\norm{u_i}_{C^1}\le \eps$.
    Applying\footnote{To be precise, the lemma cannot be applied verbatim as here we are considering an integral on the sphere. Since the support of the function is very small, the curvature plays a minor role and the computations of the lemma hold up to a minor multiplicative correction that goes to $0$ as $\kappa$ goes to $\infty$.} \cref{lem:radial_int_parts} with $a(s,t) = \frac{(1+s)^{n-3}}{(1+s)^2+t}$, we obtain that for any $1\le i\le q$
    \begin{equation}\label{eq:loc92021}
        \int_{\S^{n-1}}(1+u_i)^{n-3}\frac{\nabla^2 u_i[\nabla u_i,\nabla u_i]}{(1+u_i)^2 + \abs{\nabla u_i}^2} = -\alpha \eps^3\kappa^{-(n-2)} + \beta \eps^2 \kappa^{-(n-1)} \,,
    \end{equation}
    where $\alpha > \alpha_0(n) > 0$ and $\abs{\beta} < \beta_0(n)$ are appropriate constants that depend on the precise choice of the function $f$.
    
    Let $u=\sum_i u_i$ be the sum of all the functions $u_i$. Let $K\subseteq\R^n$ be the star-shaped domain such that $(1+u(x))x\in\partial K$ for all $x\in\S^{n-1}$. The domain $K$ is a $C^1$ $\eps$-perturbation of the ball.

    Thanks to \cref{eq:integral_H_formula} and \cref{eq:loc92021}, we have
    \begin{align*}
        \int_{\partial K} H 
        &= \bigo(1) + \int\limits_{\S^{n-1}} (1+u)^{n-3}\frac{\nabla^2 u[\nabla u,\nabla u]}{(1+u)^2 + \abs{\nabla u}^2} = \bigo(1) + \sum_{i=1}^{c\kappa^{n-1}}\int\limits_{\S^{n-1}}(1+u_i)^{n-3}\frac{\nabla^2 u_i[\nabla u_i,\nabla u_i]}{(1+u_i)^2 + \abs{\nabla u_i}^2} \\
        &=
        \bigo(1) - c\kappa^{n-1}\alpha \eps^3 \kappa^{-(n-2)} + c\kappa^{n-1}\beta\eps^2 \kappa^{-(n-1)}
        =
        \bigo(1) - c\alpha\eps^3\kappa \,.
    \end{align*}
    If $\kappa$ is chosen sufficiently large, the desired statement follows.
\end{proof}

\printbibliography[heading=bibintoc]

\end{document}